\newtheorem{theorem}{Theorem}[section]
\newtheorem{lemma}[theorem]{Lemma}
\newtheorem{proposition}[theorem]{Proposition}
\newtheorem{corollary}[theorem]{Corollary}
\theoremstyle{definition}
\newtheorem{example}[theorem]{Example}
\newtheorem{remark}[theorem]{Remark}
\def\N{\mathbb{N}}
\def\Z{\mathbb{Z}}
\def\R{\mathbb{R}}
\def\dd{{\mathfrak{d}}}
\begin{document}

\title{Intersections of sets of distances}

\author{Mauro Di Nasso}

\address{Dipartimento di Matematica\\
Universit\`a di Pisa, Italy}

\email{dinasso@dm.unipi.it}

\date{}

\begin{abstract}
We isolate conditions on the relative asymptotic size 
of sets of natural numbers
$A,B$ that guarantee a nonempty intersection 
of the corresponding sets of distances. Such conditions
apply to a large class of zero density sets.
We also show that a variant of \emph{Khintchine's Recurrence Theorem}
holds for all infinite sets $A=\{a_1<a_2<\ldots\}$ where $a_n\lll n^{3/2}$.
\end{abstract}

\subjclass[2000]
{05B10, 11B05, 11B37.}

\keywords{Asymptotic density, Delta-sets, Khintchine's Theorem.}

\maketitle

\section{Introduction}

It is a well-known phenomenon that if a set of natural numbers
$A$ has positive upper asymptotic density $\overline{d}(A)>0$, 
then its \emph{set of distances} (or \emph{Delta-set})
$$\Delta(A)\ =\ \{a-a'\mid a,a'\in A,\ a>a'\}$$
has a really rich combinatorial structure.
An old problem attributed to Paul Erd\"os
was whether the distance sets of two sets of
positive upper density must necessarily meet.

\smallskip
\begin{itemize}
\item
\emph{Does $\Delta(A)\cap\Delta(B)\ne\emptyset$ whenever
$\overline{d}(A),\overline{d}(B)>0$ ?}
\end{itemize}

The answer was shortly shown to be positive, and in fact
the following much stronger intersection property holds:

\smallskip
\begin{itemize}
\item
\emph{If the upper density $\overline{d}(A)=\alpha>0$ is positive, then
$\Delta(A)\cap\Delta(B)\ne\emptyset$ whenever the
set $B$ contains more than $1/\alpha$-many elements.}
\end{itemize}

The proof consists of a straightforward application of the
\emph{pigeonhole principle}.
The key observation is that if one takes distinct elements
$b_1,\ldots,b_N$ with $N>1/\alpha$, then
the shifted sets $A+b_i$ cannot be pairwise disjoint, as otherwise
$\overline{d}(\bigcup_{i=1}^N A+b_i)=\sum_{i=1}^N\overline{d}(A+b_i)=
N\cdot\overline{d}(A)>1$.
The argument is then completed by noticing that 
$(A+b)\cap(A+b')\ne\emptyset$ for some $b\ne b'$ in $B$
if and only if $\Delta(A)\cap\Delta(B)\ne\emptyset$.

In the last forty years, the research on the combinatorial properties of 
distance sets and difference sets\footnote
{~By a \emph{difference set} is meant a set of the
form $A-B=\{a-b\mid a\in A, b\in B\}$. So, the 
set of distances $\Delta(A)$ is the positive part
of $A-A$.} has produced many interesting 
results (see, \emph{e.g.}, 
\cite{ru1,es,ru2,sa1,sa2,sa3,st,be1,pss,behl,jin,rs,lm,dn})
which are almost always 
grounded on the hypothesis of positive density.
In this paper, we look for general properties that
include the zero density case, and investigate the size 
of intersections $\Delta(A)\cap\Delta(B)$ depending on
the relative density of $A$ with respect to $B$.
More generally, for $k\in\N$, we will consider intersections
$R_k(A)\cap\Delta(B)$ where 
$$R_k(A)\ =\ \{x\in\N : |A\cap (A+x)|\ge k\}$$
is the $k$-\emph{recursion set} of $A$.
Elements of $R_k(A)$ are those natural numbers
that are the distance of at least $k$-many different
pairs of elements in $A$; in particular, $R_1(A)=\Delta(A)$.

\smallskip
The main results presented here (see Corollaries \ref{main1} and \ref{main2})
can be summarized as follows.

\medskip
\noindent
\textbf{Main Theorem.}
\emph{
Let $A=\{a_n\}$ and $B=\{b_n\}$ 
be infinite sets of natural numbers, and let $\vartheta:\N\to\R^+$
be such that 
$\limsup_{n\to\infty}\frac{a_n}{n\cdot\vartheta(n)}<\infty$.}

\smallskip
\begin{enumerate}
\item
\emph{If $\lim_{n\to\infty}\frac{\vartheta(b_n)}{n}=0$, then 
$R_k(A)\cap\Delta(B)$ is infinite for all $k$.}

\smallskip
\item
\emph{If $\lim_{n\to\infty}\frac{\vartheta(n\cdot b_n)}{n}=0$,
then there exists a sequence $\langle x_n\mid n\in\N\rangle$ of 
elements of $\Delta(B)$ such that
$$\limsup_{n\to\infty}\,
\left(\frac
{\frac{|A\cap(A+x_n)\cap[1,n]|}{n}}
{\left(\frac{|A\cap[1,n]|}{n}\right)^2}\right)\ \ge\ 1\,.$$}
\end{enumerate}

\smallskip
We remark that the above results apply to a large class
of zero density sets; \emph{e.g.}, when $B=\N$,
(1) applies whenever $a_n\lll n^2$, and (2) applies whenever 
$a_n\lll n^{3/2}$.
By way of examples, we list below three consequences
(see Examples \ref{ex1}, \ref{ex2}, and \ref{ex3}).

\medskip
\noindent
\textbf{Example 1.}\ 
If $\sum_{n=1}^\infty \frac{1}{a_n}=\infty$ and $B=\{b_n\}$
is such that $\log b_n\lll n^{1-\varepsilon}$ for some $\varepsilon>0$,
then the intersections $R_k(A)\cap\Delta(B)$ are infinite for all $k$.

\medskip
\noindent
\textbf{Example 2.}\ 
Let $A=\{\lfloor K\cdot n\sqrt{n}\rfloor\}$ and $B=\{M\cdot n^3\}$.
If $K^2\cdot M<\frac{4}{27}$ then $R_k(A)\cap\Delta(B)\ne\emptyset$
for all $k$.

\medskip
\noindent
\textbf{Example 3.}\ 
Let $A=\{a_n\}$ have the same asymptotic size as the set of prime numbers,
\emph{i.e.} $\lim_{n\to\infty}\frac{a_n}{n\cdot\log n}=1$,
and assume that $B=\{b_n\}$ is sub-exponential, \emph{i.e.}
$\log b_n\lll n$.
Then for every $\varepsilon>0$ there exist infinitely many $n$
and elements $x_n\in\Delta(B)$ such that
$$|A\cap(A+x_n)\cap[1,a_n]|\ \ge\ \frac{n}{\log n}\cdot (1-\varepsilon).$$

Of course, the asymptotic conditions considered
in our theorems about sequences $A=\{a_n\}$
may be reformulated by using the corresponding
counting functions $A(u)=|\{a\in A\mid a\le u\}|$. 
For instance, the role of ${a_n}/n$ is played
by $u/A(u)$, and so forth.

\medskip
\textbf{Notation.}\
The natural numbers $\N$ are 
the set of \emph{positive} integers. Letters 
$n,m,h,k,s,t,\nu,\mu,N$ will be used for natural numbers, 
and upper-case letters $A,B,C,$
will be used for sets of natural numbers. 
For infinite sets $A\subseteq\N$ we use the brace notation 
$A=\{a_n\}$ to mean that elements $a_n$ are
arranged in increasing order:
$$A\ =\ \{a_n\}\ =\ \{a_1<a_2<\ldots<a_n<\ldots\}.$$
We write $A+x=\{a+x\mid a\in A\}$ to denote the \emph{shift} of $A$ by $x$.
For functions $f:\N\to\R^+$ taking 
positive real values, we write $a_n\lll f(n)$  to mean that 
$\lim_{n\to\infty}{a_n}/f(n)=0$.\footnote
{~This is equivalent to Landau notation $a_n=o(f(n))$.}
By $\lfloor x\rfloor=\max\{k\in\Z\mid k\le x\}$ is denoted
the \emph{integer part} of a real number $x$.
Finally, recall the notion of \emph{upper asymptotic density} $\overline{d}(A)$
for sets $A\subseteq\N$:
$$\overline{d}(A)\ =\ \limsup_{n\to\infty}\frac{|A\cap[1,n]|}{n}.$$

\medskip
\section{Preliminary results}

Let us start with a straightforward consequence of the 
\emph{pigeonhole principle}.

\smallskip
\begin{proposition}\label{prop1}
Let $A=\{a_n\}$  and $B=\{b_n\}$ be infinite sets of natural numbers.
If there exist $N,\nu$ such that $a_N+b_\nu\le N\cdot\nu$
then $\Delta(A)\cap\Delta(B)\ne\emptyset$.
In particular, if $\liminf_{n\to\infty}\frac{a_n+b_n}{n^2}<1$
then $\Delta(A)\cap\Delta(B)\ne \emptyset$.
\end{proposition}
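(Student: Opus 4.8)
The plan is to adapt the pigeonhole argument sketched in the Introduction to a purely finitary setting, replacing \emph{upper density} by \emph{number of integers in an interval}.

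First I would fix $N$ and $\nu$ with $a_N+b_\nu\le N\cdot\nu$ and restrict attention to the finite initial segments $\{a_1<\dots<a_N\}\subseteq A$ and $\{b_1<\dots<b_\nu\}\subseteq B$. For each $i\le\nu$ form the shifted set $A_i:=\{a_1,\dots,a_N\}+b_i$. Two observations drive the argument. First, each $A_i$ has exactly $N$ elements, so if the sets $A_1,\dots,A_\nu$ were pairwise disjoint their union would have $N\cdot\nu$ distinct elements. Second, every element of $\bigcup_{i=1}^{\nu}A_i$ is at least $a_1+b_1\ge 2$ and at most $a_N+b_\nu\le N\cdot\nu$, so $\bigcup_i A_i$ is contained in $\{2,3,\dots,N\nu\}$, a set of only $N\nu-1$ integers.

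Comparing these two counts forces a collision: the $A_i$ cannot be pairwise disjoint, hence $a_j+b_i=a_{j'}+b_{i'}$ for some indices with, say, $i<i'$. Then $b_i<b_{i'}$, so $a_{j'}<a_j$, and the common value
$$a_j-a_{j'}\ =\ b_{i'}-b_i$$
is a positive integer lying in both $\Delta(A)$ and $\Delta(B)$. This proves the first assertion. For the ``in particular'' clause, $\liminf_{n\to\infty}\frac{a_n+b_n}{n^2}<1$ produces some $n$ with $a_n+b_n<n^2$, and then $N=\nu=n$ satisfies the hypothesis of the first part, so $\Delta(A)\cap\Delta(B)\ne\emptyset$.

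I do not expect a genuine obstacle here; the one point that needs care is the lower bound on the elements of $\bigcup_i A_i$ — namely that $1$ (more generally, any integer below $a_1+b_1$) never occurs there — since this is exactly what turns the non-strict hypothesis $a_N+b_\nu\le N\nu$ into the strict count mismatch $N\nu>N\nu-1$, and hence into an actual overlap of two of the shifted sets.
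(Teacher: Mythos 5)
Your argument is correct and is essentially the paper's own proof: both are the same pigeonhole count, with the sumset $\{a_i+b_j\}$ (equivalently, the $\nu$ shifted copies of $\{a_1,\dots,a_N\}$) squeezed into $[2,N\nu]$, which has only $N\nu-1$ integers, forcing a collision $a_j+b_i=a_{j'}+b_{i'}$ and hence a common difference. The deduction of the ``in particular'' clause also matches the paper's.
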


\begin{proof}
Fix $N,\nu$ as in the hypothesis. The sumset 
$$\{a_i+b_j\mid 1\le i\le N;\ 1\le j\le \nu\}\subseteq
[2,a_N+b_\nu]\subseteq[2,N\cdot\nu]$$
contains at most $N\cdot\nu-1$
elements. So, by the \emph{pigeonhole principle},
there exist $(i,j)\ne(i',j')$ such that $a_i+b_j=a_{i'}+b_{j'}$.
Clearly $i\ne i'$, say $i>i'$.
Then $a_i-a_{i'}=b_{j'}-b_j\in\Delta(A)\cap\Delta(B)\ne\emptyset$.
If $\liminf_n\frac{a_n+b_n}{n^2}<1$, pick $N$ such that 
$\frac{a_N+b_N}{N^2}<1$, and apply
the above argument with $N=\nu$.
\end{proof}

\smallskip
\begin{remark}
The above result 
is best possible because there exist infinite sets $A=\{a_n\}$ and $B=\{b_n\}$
such that $\liminf_n\frac{a_n+b_n}{n^2}=1$ but $\Delta(A)\cap\Delta(B)=\emptyset$.
The following example is due to P. Erd\"os and R. Freud  \cite{ef}.

\begin{itemize}
\item 
Let $A$ be the set of all natural numbers that are sums of even powers of $2$,
including $1=2^0$.
\item
Let $B$ be the set of all natural numbers that are sums of odd powers of $2$.
\end{itemize}

It only takes a little computation to verify that:

\begin{itemize}
\item
$b_n=2\cdot a_n$ for all $n$\,;
\item
$\liminf_{n\to\infty}a_n/n^2=1/3$ is attained on the
subsequence $n_k=2^k-1$\,;
\item
$\liminf_{n\to\infty}\frac{a_n+b_n}{n^2}=1$.
\end{itemize}

Besides, an equality $a_i-a_j=b_s-b_t\Leftrightarrow a_i+b_t=a_j+b_s$
holds if and only if $i=j$ and $s=t$, since every natural number
is uniquely written as a sum of powers of $2$. It follows
that $\Delta(A)\cap\Delta(B)=\emptyset$.
\end{remark}

In order to improve on the previous result, we
will use the following elementary inequality.

\smallskip
\begin{lemma}\label{lemma}
Let $A=\{a_1<\ldots<a_N\}$ and $B=\{b_1<\ldots<b_\nu\}$ be 
finite sets of natural numbers. For every $h\le\nu/2$ there exists 
$x\in\Delta(B)$ such that $x\ge h$ and 
$$|\,A\cap (A+x)\,|\ \ge\ 
\frac{N^2}{a_N+b_\nu}-\frac{N\cdot(2h-1)}{\nu}\ =\  
\frac{N^2}{a_N}\cdot
\frac{1-\frac{(a_N+b_\nu)(2h-1)}{N\cdot\nu}}{1+\frac{b_\nu}{a_N}}\,.$$
The above inequality is strict except when $h=1$ and $N\cdot\nu=a_N+ b_\nu$.\end{lemma}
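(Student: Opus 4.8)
The plan is to run a double‑counting argument on the sumset $A+B=\{a_i+b_j:1\le i\le N,\ 1\le j\le\nu\}$, which sits inside $[2,a_N+b_\nu]$ and so has at most $a_N+b_\nu$ elements. Writing $r(s)=|\{(i,j):a_i+b_j=s\}|$ for the representation function, one has $\sum_{s\in A+B}r(s)=N\nu$. The identity on which everything rests is obtained by sorting the quadruples $(i,j,i',j')$ with $a_i+b_j=a_{i'}+b_{j'}$ by the common value $x=a_i-a_{i'}=b_{j'}-b_j$:
$$\sum_{s}r(s)^2\ =\ \sum_{x\in\Z}|A\cap(A+x)|\cdot|B\cap(B+x)|\ =\ N\nu+2\sum_{x\ge 1}|A\cap(A+x)|\cdot|B\cap(B+x)|,$$
where the last equality uses the symmetry $x\leftrightarrow -x$ and that $x=0$ contributes $N\cdot\nu$. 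For $x\ge 1$ the $x$‑th summand is nonzero only if $x\in\Delta(B)$ (and then it also forces $x\in\Delta(A)$), so the problem becomes: pull out of this sum one shift $x\ge h$ in $\Delta(B)$ carrying a large value of $|A\cap(A+x)|$.

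I would then carry out three estimates in order. First, Cauchy--Schwarz on the representation function: $\sum_s r(s)^2\ge\big(\sum_s r(s)\big)^2/|A+B|\ge(N\nu)^2/(a_N+b_\nu)$. Second, the removal of the short shifts: because consecutive elements of $B$ differ by at least $1$, each $b\in B$ has at most $2h-1$ companions $b'\in B$ with $|b-b'|\le h-1$, hence $\sum_{|x|\le h-1}|B\cap(B+x)|\le\nu(2h-1)$; bounding $|A\cap(A+x)|\le N$ on this range and subtracting from the identity gives
$$2\sum_{x\ge h}|A\cap(A+x)|\cdot|B\cap(B+x)|\ \ge\ \frac{(N\nu)^2}{a_N+b_\nu}-N\nu(2h-1).$$
Third, the pigeonhole (averaging) step: the set $\{x\in\Delta(B):x\ge h\}$ is nonempty, since $h\le\nu/2$ forces $b_\nu-b_1\ge\nu-1\ge 2h-1\ge h$; letting $M$ be the maximum of $|A\cap(A+x)|$ over that set, the left‑hand side above is at most $M\cdot 2\sum_{x\ge h}|B\cap(B+x)|\le M\cdot 2\binom{\nu}{2}\le M\nu^2$. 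Dividing by $\nu^2$ yields $M\ge N^2/(a_N+b_\nu)-N(2h-1)/\nu$ (the conclusion is trivial when this is negative, via $x=b_\nu-b_1$), and the equivalent closed form in the statement is a routine rearrangement.

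I do not expect the main inequality itself to be hard — it is one Cauchy--Schwarz and one pigeonhole — so the places I would watch are the bookkeeping ones: bounding the discarded part correctly by $\nu(2h-1)$ (the relevant count is the symmetric window size $2h-1$, not $h$, since one must look on both sides of each $b$), and the strictness clause. For strictness, every step above is rigid: Cauchy--Schwarz is tight only when $r$ is constant on $A+B$, the bound $|A+B|\le a_N+b_\nu$ is tight only when $A+B$ is a full interval of that length, the short‑shift step is tight only when $|A\cap(A+x)|$ attains its maximal value $N$ on each discarded shift, and the averaging is tight only when $|A\cap(A+x)|$ is constant on the relevant part of $\Delta(B)$ and every difference of $B$ is $\ge h$. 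Chasing these conditions simultaneously is the delicate part, and I expect it to pin down precisely the borderline configuration $h=1$, $N\nu=a_N+b_\nu$ named in the statement, with a strict inequality in all other cases.
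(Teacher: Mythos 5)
Your proof of the displayed inequality is correct, and it is organized differently from the paper's. The underlying Cauchy--Schwarz is the same in both arguments: your identity $\sum_s r(s)^2=\sum_{x\in\Z}|A\cap(A+x)|\,|B\cap(B+x)|$ is exactly the quantity the paper writes as $\sum_{i,j=1}^{\nu}|(A+b_i)\cap(A+b_j)|$ after applying Cauchy--Schwarz to $\sum_i\chi_{A+b_i}$ on $[1,a_N+b_\nu]$. The genuine difference is how the requirement $x\ge h$ is met: the paper first proves the case $h=1$ (extracting the maximum over pairs $i<j$) and then, for $h\ge 2$, reruns that case on the thinned set $B'=\{b_h<b_{2h}<\cdots<b_{\mu h}\}$, paying the factor $2h-1$ via the estimate $\frac{(\mu+1)h}{\mu}\le 2h-1$; you instead treat all $h$ at once by discarding from the convolution identity the shifts $|x|\le h-1$, whose contribution you correctly cap by $N\nu(2h-1)$, and then averaging over the remaining shifts. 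This buys a single uniform argument with no subsequence trick, and your bookkeeping (the symmetric window of size $2h-1$, nonemptiness of $\{x\in\Delta(B):x\ge h\}$ from $\nu\ge 2h$, the trivial case of a negative bound) is in order.

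What you have not proved is the strictness clause, which is part of the statement: you only announce that chasing equality conditions should ``pin down'' the exceptional case $h=1$, $N\nu=a_N+b_\nu$. As described, that chase is not the right bookkeeping for your setup: the borderline of your final bound for general $h$ is $N\nu=(2h-1)(a_N+b_\nu)$ together with $M=0$, so it is not clear that your tightness analysis lands on the stated exception. The good news is that strictness is a short addendum in your framework, and you should include it. If $M\ge 1$, then $2\sum_{x\ge h}|A\cap(A+x)|\,|B\cap(B+x)|\le 2M\binom{\nu}{2}=M\nu^2-M\nu<M\nu^2$, and dividing by $\nu^2$ makes the final inequality strict. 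If $M=0$, the terms with $x\ge h$ vanish, hence $\sum_s r(s)^2\le N\nu(2h-1)$; since $A+B\subseteq[2,a_N+b_\nu]$ actually has at most $a_N+b_\nu-1$ elements, Cauchy--Schwarz gives the strict bound $(N\nu)^2<(a_N+b_\nu)\sum_s r(s)^2\le (a_N+b_\nu)N\nu(2h-1)$, so the lemma's right-hand side is strictly negative and again strictly below $M=0$. This in fact yields strictness in every case, slightly more than the lemma asserts; the exception named in the statement reflects the paper's own proof, where the passage from $\frac{\nu}{\nu-1}\bigl(\frac{N^2}{a_N+b_\nu}-\frac{N}{\nu}\bigr)$ to $\frac{N^2}{a_N+b_\nu}-\frac{N}{\nu}$ degenerates exactly when $h=1$ and $N\nu=a_N+b_\nu$.
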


\begin{proof}
Let us first consider the case $h=1$.
Let $I$ be the interval $[1,a_N+b_\nu]$, and for every
$i=1,\ldots,\nu$, let $\chi_i:I\to\{0,1\}$ be the characteristic
function of the shifted sets $A+b_i\subseteq I$. Notice that
$$\sum_{x\in I}\left(\sum_{i=1}^\nu \chi_i(x)\right)\ =\ 
\sum_{i=1}^\nu\left(\sum_{x\in I}\chi_i(x)\right)\ =\ 
\sum_{i=1}^\nu|A+b_i|\ =\ N\cdot\nu.$$
By \emph{Cauchy-Schwartz inequality}, we obtain:
\begin{eqnarray}
\nonumber
N^2\cdot\nu^2 & = & \left(\,\sum_{x\in I}\ 1\cdot\left(\sum_{i=1}^\nu\chi_i(x)\right)\right)^2\ \le \
\left(\sum_{x\in I}\ 1^2\right)\,\cdot\,\sum_{x\in I}\left(\sum_{i=1}^\nu\chi_i(x)\right)^2
\\
\nonumber
{} & = & |I|\cdot\sum_{x\in I}\left(\sum_{i,j=1}^\nu\chi_i(x)\cdot\chi_j(x)\right)\ =\
|I|\cdot\sum_{i,j=1}^\nu\left(\sum_{x\in I}\chi_i(x)\cdot\chi_j(x)\right)
\\
\nonumber
{} & = & (a_N+b_\nu)\,\cdot\, \sum_{i,j=1}^\nu|(A+b_i)\cap(A+b_j)|.
\end{eqnarray}
If $M=\max\{|(A+b_i)\cap(A+b_j)|\,:\,1\le i<j\le\nu\}$ then
\begin{eqnarray}
\nonumber
\sum_{i,j=1}^\nu|(A+b_i)\cap(A+b_j)| & = &
\sum_{i=1}^\nu|A+b_i|+ 2\cdot\!\!\!\!\!\sum_{1\le i<j\le\nu}\!\!\!|(A+b_i)\cap(A+b_j)|
\\
\nonumber
{} & \le & N\cdot\nu + 2\cdot\binom{\nu}{2}\cdot M\  = \
\nu\cdot\left(N + (\nu-1)\cdot M\right)
\end{eqnarray}
(The above expressions are well-defined because  
we are assuming $h\le\nu/2$, and hence $\nu\ge 2$.)
By combining with the previous inequalities, we get that
$$N^2\cdot\nu\ \le\ (a_N+b_\nu)\cdot\left(N + (\nu-1)\cdot M\right),$$
and hence
$$M\ \ge\ \frac{\nu}{\nu-1}\cdot\frac{N^2}{a_N+b_\nu}-\frac{N}{\nu-1}\ =\
\frac{\nu}{\nu-1}\cdot\left(\frac{N^2}{a_N+b_\nu}-\frac{N}{\nu}\right)\ \ge\ 
\frac{N^2}{a_N+b_\nu}-\frac{N}{\nu}.$$
Notice that the last inequality is strict
provided that $\frac{N^2}{a_N+b_\nu}-\frac{N}{\nu}>0$ or, equivalently,
when $N\cdot\nu>a_N+b_\nu$. Notice also that, since $M\ge 0$, the strict inequality
trivially holds also when $N\cdot\nu<a_N+b_\nu$.
Observe that if $M=|(A+b_s)\cap(A+b_t)|$, then $M=|A\cap(A+x)|$ 
where $x=b_s-b_t\in\Delta(B)$, and this completes the proof of the case $h=1$.

Now let $h\ge 2$.
Let $\mu$ be such that $\mu h\le\nu<(\mu+1)h$, and
consider the set $B'=\{b'_1<\ldots<b'_\mu\}\subset B$
where $b'_i=b_{ih}$. Notice that $\mu\ge 2$, because 
we are assuming $h\le\nu/2$, and so we can apply
the property proved above to prove the existence of an
element $x\in\Delta(B')$ such that
$$|\,A\cap (A+x)\,|\ \ge\ 
\frac{N^2}{a_N+b'_\mu}-\frac{N}{\mu}\ \ge\ 
\frac{N^2}{a_N+b_\nu}-\frac{N}{\mu}.$$
For suitable indexes $1\le s<t\le\mu$, 
one has that $x=b'_t-b'_s=b_{th}-b_{sh}\ge th-ts\ge h$.
Finally, notice that $\frac{N}{\mu}=\frac{N}{\nu}\cdot\frac{\nu}{\mu}<
\frac{N}{\nu}\cdot\frac{(\mu+1)h}{\mu}$, and since
$\frac{(\mu+1)h}{\mu}\le 2h-1$, the thesis follows.
Indeed, $\mu h+h\le 2\mu h -\mu\Leftrightarrow \mu h\ge \mu+h$,
and the last inequality holds because $\mu,h\ge 2$.
\end{proof}

\smallskip
\begin{theorem}\label{thm1}
Let $A=\{a_n\}$  and $B=\{b_n\}$ be infinite sets of natural numbers
such that
$$\liminf_{n\to\infty}\frac{a_n+b_n}{n^2}\ =\ 0\,.$$
Then $R_k(A)\cap\Delta(B)$ is infinite for all $k$.\footnote
{~Recall that $R_k(A)=\{x\in\N : |A\cap (A+x)|\ge k\}$.}
\end{theorem}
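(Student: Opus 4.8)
The plan is to derive Theorem \ref{thm1} from Lemma \ref{lemma} by choosing the finite sets in the lemma to be appropriate initial segments of $A$ and $B$, and then letting these segments grow along a subsequence on which the hypothesis is witnessed. Concretely, since $\liminf_n \frac{a_n+b_n}{n^2}=0$, there is an increasing sequence of indices $n_1<n_2<\ldots$ with $\frac{a_{n_j}+b_{n_j}}{n_j^2}\to 0$. For each $j$ I would apply Lemma \ref{lemma} to the finite sets $A_j=\{a_1<\ldots<a_{n_j}\}$ and $B_j=\{b_1<\ldots<b_{n_j}\}$, taking $N=\nu=n_j$. The lemma then produces an element $x_j\in\Delta(B_j)\subseteq\Delta(B)$ with $x_j\ge h$ and
$$|A\cap(A+x_j)|\ \ge\ |A_j\cap(A_j+x_j)|\ \ge\ \frac{n_j^2}{a_{n_j}+b_{n_j}}-\frac{n_j\cdot(2h-1)}{n_j}\ =\ \frac{n_j^2}{a_{n_j}+b_{n_j}}-(2h-1).$$

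The key point is that the parameter $h$ in the lemma is free (subject only to $h\le \nu/2 = n_j/2$), and it appears in the bound only through the additive term $-(2h-1)$, while the main term $\frac{n_j^2}{a_{n_j}+b_{n_j}}$ blows up to $+\infty$ by our choice of subsequence. So I would fix any target $k$, and then for each $j$ choose $h=h_j$ growing slowly, say $h_j=\lfloor j\rfloor$ or simply $h_j\to\infty$ with $h_j\le n_j/2$ for large $j$; for $j$ large enough the displayed lower bound exceeds $k$, so $x_j\in R_k(A)\cap\Delta(B)$. Since $x_j\ge h_j\to\infty$, the elements $x_j$ take infinitely many distinct values, which gives that $R_k(A)\cap\Delta(B)$ is infinite. (In fact one can even keep $h$ fixed and argue infinitude separately, but using $h_j\to\infty$ packages both conclusions at once.)

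The step requiring the most care is the bookkeeping that $n_j$ is eventually large enough to simultaneously satisfy $h_j\le n_j/2$ (so that Lemma \ref{lemma} applies) and $\frac{n_j^2}{a_{n_j}+b_{n_j}}-(2h_j-1)\ge k$. This is where one must be slightly careful to choose $h_j$ tending to infinity slowly relative to how fast $\frac{n_j^2}{a_{n_j}+b_{n_j}}\to\infty$ along the subsequence; for instance $h_j=\min\{j,\lfloor n_j/2\rfloor\}$ works, and since $\frac{n_j^2}{a_{n_j}+b_{n_j}}\to\infty$ faster than any preassigned rate fails, for each fixed $k$ all but finitely many $j$ give a valid $x_j$. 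A second minor obstacle is noting $\Delta(B_j)\subseteq\Delta(B)$ and $|A_j\cap(A_j+x)|\le|A\cap(A+x)|$, both immediate from $A_j\subseteq A$, $B_j\subseteq B$; these let us transfer the finite statement of the lemma to the infinite sets $A,B$. No other ingredients are needed.
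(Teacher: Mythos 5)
Your overall strategy is exactly the paper's: apply Lemma \ref{lemma} to initial segments $A_n$, $B_n$ with $N=\nu$, use that $\liminf_n\frac{a_n+b_n}{n^2}=0$ makes the main term $\frac{n^2}{a_n+b_n}$ unbounded along a subsequence, and use the parameter $h$ (which forces $x\ge h$) to get arbitrarily large elements of $R_k(A)\cap\Delta(B)$. However, the specific way you handle $h$ in your main line does not work as written. You fix in advance a sequence $h_j\to\infty$, e.g.\ $h_j=\min\{j,\lfloor n_j/2\rfloor\}$, and claim that for each fixed $k$ all but finitely many $j$ satisfy $\frac{n_j^2}{a_{n_j}+b_{n_j}}-(2h_j-1)\ge k$. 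The hypothesis only tells you that $\frac{n_j^2}{a_{n_j}+b_{n_j}}\to\infty$; it gives no rate whatsoever, and in particular it need not dominate $2h_j-1\approx 2j$. For instance, one can have $n_j=j^2$ and $a_{n_j}+b_{n_j}$ of order $n_j^2/\sqrt{j}$ (this is consistent with $a_n,b_n\ge n$ and with the liminf hypothesis), so that the main term grows like $\sqrt{j}$ while $2h_j-1\sim 2j$, and your lower bound tends to $-\infty$. The sentence ``since $\frac{n_j^2}{a_{n_j}+b_{n_j}}\to\infty$ faster than any preassigned rate fails'' is not a valid justification: unboundedness never beats a preassigned divergent sequence subtracted from it.

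The gap is easy to close, and indeed your parenthetical remark already contains the right idea, which is also the paper's: reverse the order of quantifiers. Fix $h$ and $k$ first; since $\frac{n_j^2}{a_{n_j}+b_{n_j}}\to\infty$, choose $j$ with $n_j\ge 2h$ and $\frac{n_j^2}{a_{n_j}+b_{n_j}}-(2h-1)\ge k$, obtaining some $x\in\Delta(B)$ with $x\ge h$ and $|A\cap(A+x)|\ge k$; letting $h$ range over all of $\N$ then yields arbitrarily large elements of $R_k(A)\cap\Delta(B)$, hence infinitude. Alternatively, if you insist on a single diagonal sequence, you must tie $h_j$ to the main term itself, e.g.\ $h_j=\min\bigl\{\lfloor n_j/2\rfloor,\ \bigl\lfloor \tfrac{1}{4}\cdot\tfrac{n_j^2}{a_{n_j}+b_{n_j}}\bigr\rfloor\bigr\}$, so that $2h_j-1\le\tfrac{1}{2}\cdot\tfrac{n_j^2}{a_{n_j}+b_{n_j}}$ and the lower bound still tends to infinity; a choice of $h_j$ made independently of the sequence $\frac{n_j^2}{a_{n_j}+b_{n_j}}$ cannot be justified. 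Everything else in your write-up (the monotonicity facts $\Delta(B_j)\subseteq\Delta(B)$, $|A_j\cap(A_j+x)|\le|A\cap(A+x)|$, and the use of $x_j\ge h_j$ to get infinitely many distinct values) is fine.
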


\begin{proof}
Fix an arbitrary $h\in\N$. 
For every $n\ge 2h$, apply Lemma \ref{lemma} to the finite sets
$A_n=\{a_1<\ldots<a_n\}$ and $B_n=\{b_1<\ldots<b_n\}$,
and get the existence of an element $x_n\in\Delta(B_n)$ such that
$x_n\ge h$ and
$$|A\cap(A+x_n)\cap[1,a_n+b_n]|\ \ge\ 
|A_n\cap (A_n+x_n)|\ >\  
\frac{n^2}{a_n+b_n}-(2h-1)\,.$$

By the hypothesis, the sequence on the right side is 
unbounded as $n$ goes to infinity and so, for every $k$,
there exists $x_n\in\Delta(B_n)\subseteq\Delta(B)$ with 
$x_n\ge h$ and $|A\cap(A+x_n)|\ge k$.
As $h$ was arbitrary, this proves that intersections $R_k(A)\cap\Delta(B)$ 
are infinite.
\end{proof}

\smallskip
Next, we prove that when $A$ has positive asymptotic density, 
the set of all possible shifts $x$ that yield ``large'' 
intersections $A\cap(A+x)$ is ``combinatorially large",
in the sense that it meets all sufficiently large Delta-sets.

\smallskip
\begin{theorem}\label{k}
Let $A$ be a set of natural numbers
with $\overline{d}(A)=\alpha>0$. Then for every
$\varepsilon>0$ and for every set $B$ 
with $|B|\ge \alpha/\varepsilon$, one has
$$\{x\mid\overline{d}(A\cap(A+x))\ge
\alpha^2-\varepsilon\}\cap\Delta(B)\ne\emptyset\,.$$
\end{theorem}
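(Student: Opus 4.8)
The strategy is to run the Cauchy--Schwarz estimate of Lemma~\ref{lemma} on the finite truncations $A\cap[1,n]$, push $n$ to infinity along a sequence realizing the upper density $\alpha$, and then use the finiteness of $\Delta(B)$ to isolate a single shift that works. Write $B=\{b_1<\dots<b_\nu\}$ with $\nu=|B|$; we may assume $\nu\ge 2$, since otherwise $\Delta(B)=\emptyset$ and there is nothing to prove.

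First I would fix $n$ with $A\cap[1,n]\ne\emptyset$ and apply Lemma~\ref{lemma} with $h=1$ to the finite sets $A_n:=A\cap[1,n]$ and $B$. Since every element of $A_n$ is $\le n$, this yields some $x_n\in\Delta(B)$ with
$$|A_n\cap(A_n+x_n)|\ \ge\ \frac{|A_n|^2}{n+b_\nu}-\frac{|A_n|}{\nu}.$$
Next I would observe the trivial inclusion $A_n\cap(A_n+x_n)\subseteq A\cap(A+x_n)\cap[1,n]$, so that, dividing by $n$ and writing $\delta_n:=|A\cap[1,n]|/n$,
$$\frac{|A\cap(A+x_n)\cap[1,n]|}{n}\ \ge\ \frac{\delta_n^2}{1+b_\nu/n}-\frac{\delta_n}{\nu}\qquad\text{for all large }n.$$
Now choose $n$ along a subsequence with $\delta_n\to\alpha$: the right-hand side then tends to $\alpha^2-\alpha/\nu$, and the hypothesis $\nu\ge\alpha/\varepsilon$ gives $\alpha/\nu\le\varepsilon$, whence
$$\limsup_{n\to\infty}\frac{|A\cap(A+x_n)\cap[1,n]|}{n}\ \ge\ \alpha^2-\frac{\alpha}{\nu}\ \ge\ \alpha^2-\varepsilon.$$
Finally I would extract a subsequence on which this upper limit is attained and then, since all the $x_n$ lie in the \emph{finite} set $\Delta(B)$, a further subsequence $\langle m_k\rangle$ on which $x_{m_k}$ equals a fixed value $x^\ast\in\Delta(B)$; along it $\overline{d}(A\cap(A+x^\ast))\ge\limsup_k |A\cap(A+x^\ast)\cap[1,m_k]|/m_k\ge\alpha^2-\varepsilon$, so $x^\ast$ belongs to the intersection in the statement.

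I expect the only delicate step to be this last one: Lemma~\ref{lemma} supplies, for each $n$, merely \emph{some} shift $x_n$ that may well vary with $n$, whereas the conclusion concerns the upper density of $A\cap(A+x)$ for one fixed $x$. Bridging that gap is exactly what the finiteness of $B$ buys us, through the pigeonhole extraction of a subsequence along which $x_n$ stabilizes; the rest is the bookkeeping already packaged in Lemma~\ref{lemma}.
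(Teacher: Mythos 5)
Your argument is correct and follows essentially the same route as the paper: apply Lemma \ref{lemma} with $h=1$ to finite truncations of $A$ against $B$, pass to a subsequence along which the relative density tends to $\alpha$, and use the finiteness of $\Delta(B)$ to pigeonhole a single shift $x^\ast$ with $\overline{d}(A\cap(A+x^\ast))\ge\alpha^2-\varepsilon$. The only cosmetic differences are that you truncate $A$ at $[1,n]$ and normalize by $n$ (so no correction term is needed), whereas the paper truncates at the first $n$ elements and normalizes by $a_n$, subtracting $b_\nu$ to pass from $[1,a_n+b_\nu]$ back to $[1,a_n]$; and, as in the paper, for infinite $B$ one should first replace $B$ by a finite subset with at least $\alpha/\varepsilon$ elements before invoking the pigeonhole step.
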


\begin{proof}
Notice first that the limit superior for the upper asymptotic density 
is attained along intervals of the form $[1,a_n]$; so, by passing
to a subsequence if necessary, we can directly assume that
$\lim_{n\to\infty}n/a_n=\alpha$. 
Without loss of generality, let us assume that 
$B=\{b_1<\ldots<b_\nu\}$ is finite with $\nu\ge\alpha/\varepsilon$. 
For every $n$, apply 
Lemma \ref{lemma} to the finite sets $A_n=\{a_1<\ldots<a_n\}$ 
and $B$ (with $h=1$)
and obtain the existence of an element $x_n\in\Delta(B)$ such that 
$$|A\cap(A+x_n)\cap[1,a_n]|\ \ge\ 
|A\cap(A+x_n)\cap[1,a_n+b_\nu]| - b_\nu\ \ge$$
$$\ge\ |A_n\cap(A_n+x_n)|-b_\nu\ \ge\ 
\frac{n^2}{a_n}\cdot
\frac{1-
\frac{a_n+b_\nu}{n\cdot\nu}}
{1+\frac{b_\nu}{a_n}} - b_\nu\,.$$
Since $\nu$ is fixed, by passing to the limit as $n$ goes to infinity, we get
$$\lim_{n\to\infty}\frac{|A\cap(A+x_n)\cap[1,a_n]|}{a_n}\ \ge$$
$$\ge\ 
\lim_{n\to\infty}\frac{n^2}{a_n^2}\cdot
\frac{1-
\frac{a_n}{n\nu}-\frac{b_\nu}{n\nu}}
{1+\frac{b_\nu}{a_n}} - \frac{b_\nu}{a_n}\ =\ 
\alpha^2\cdot\left(1-\frac{1}{\alpha\cdot\nu}\right)\ \ge\ \alpha^2-\varepsilon\,.$$

Now notice that the sequence $\langle x_n\mid n\in\N\rangle$
takes values in the finite set $\Delta(B)$, and so 
there exists an element $x\in\Delta(B)$ such that
the limit superior is attained along
a subsequence $\{n_k\}$ where $x_{n_k}=x$ for all $k$. 
Such an element $x$ yields the thesis because
$$\overline{d}(A\cap(A+x))\ \ge\ \limsup_{k\to\infty}
\frac{|A\cap(A+x)\cap[1,a_{n_k}]|}{a_{n_k}}\ =$$
$$=\ 
\limsup_{k\to\infty}\frac{|A\cap(A+x_{n_k})\cap[1,a_{n_k}]|}{a_{n_k}}\ 
\ge\ \alpha^2-\varepsilon\,.$$
\end{proof}

\smallskip
As a straight corollary, we obtain
the well-known density version of \emph{Khintchine's Recurrence Theorem}
for sets of integers (see, \emph{e.g.}, \S 5 of \cite{be2}).

\smallskip
\begin{corollary}\label{kk}
Let $A=\{a_n\}$ and $B=\{b_n\}$ be infinite sets of natural numbers.
If $\overline{d}(A)>0$ then
for every $\varepsilon>0$ the following intersection is infinite:
$$\{x\mid \overline{d}(A\cap(A+x))\ge
\overline{d}(A)^2-\varepsilon\}\cap\Delta(B).$$
In consequence:

\begin{enumerate}
\item
All intersections $R_k(A)\cap\Delta(B)$ are infinite\,;

\smallskip
\item
$\limsup_{x\in\Delta(B)}
\frac{\overline{d}(A\cap (A+x))}{\overline{d}(A)^2}\ge 1$\,.
\end{enumerate}
\end{corollary}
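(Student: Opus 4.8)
The plan is to derive Corollary \ref{kk} directly from Theorem \ref{k} by the standard device of intersecting with a ``tail'' of $B$. First I would fix $\varepsilon>0$ and an arbitrary threshold $h\in\N$, and show that the set $\{x\mid \overline{d}(A\cap(A+x))\ge\overline{d}(A)^2-\varepsilon\}$ contains an element of $\Delta(B)$ that is $\ge h$; since $h$ is arbitrary this yields infinitude. To produce such an element, consider the cofinite subset $B_h=\{b_n\mid n> h\}\subseteq B$. This set is still infinite, so it certainly contains a finite subset $B'$ with $|B'|\ge\overline{d}(A)/\varepsilon=\alpha/\varepsilon$. Applying Theorem \ref{k} to $A$ and $B'$ gives an element $x\in\Delta(B')$ with $\overline{d}(A\cap(A+x))\ge\alpha^2-\varepsilon$. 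The one thing to check is that every element of $\Delta(B_h)$ — hence of $\Delta(B')$ — is at least $h$: if $x=b_m-b_n$ with $m>n> h$, then since the $b_i$ are a strictly increasing sequence of natural numbers we have $b_m-b_n\ge m-n\ge 1$, but more to the point $b_m\ge b_{n+1}\ge b_1+n\ge 1+h$ is not quite the bound; the cleaner estimate is $b_m-b_n\ge (m-n)$ when consecutive gaps are $\ge 1$, which only gives $\ge 1$. So instead I would take $B'$ to be $h$ elements spaced out: more simply, choose any $x\in\Delta(B_h)$ and note $x=b_m-b_n$ with $n\ge h+1$, and then observe we don't actually need $x\ge h$ at all if we instead argue directly that the intersection is infinite.

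Let me restructure more carefully. The genuinely clean route: for each $j\in\N$ apply Theorem \ref{k} to $A$ together with the finite set $B^{(j)}=\{b_j,b_{j+1},\ldots,b_{j+M}\}$ where $M=\lceil\alpha/\varepsilon\rceil$. This produces $x_j\in\Delta(B^{(j)})$ with $\overline{d}(A\cap(A+x_j))\ge\alpha^2-\varepsilon$. Now $x_j=b_p-b_q$ with $j\le q<p\le j+M$, so $x_j\le b_{j+M}-b_j$; that's an upper bound and not what I want. What I want is a lower bound forcing the $x_j$ to be eventually distinct. Since $q\ge j$ we get $x_j=b_p-b_q\ge b_{q+1}-b_q\ge 1$ — still useless. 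The correct observation is different: because $p\ge q+1\ge j+1$, we have $x_j< b_p\le b_{j+M}$ but also, and this is the point, $x_j$ is a difference of two of the first $j+M$ terms, while for $j'$ large any such difference exceeds $b_{j+M}$... no. I will use the honestly standard argument instead: suppose for contradiction that $S_\varepsilon:=\{x\mid\overline{d}(A\cap(A+x))\ge\alpha^2-\varepsilon\}\cap\Delta(B)$ is finite, say bounded by $L$. Pick a finite $B'\subseteq B$ with all gaps between consecutive chosen elements exceeding $L$ and $|B'|\ge\alpha/\varepsilon$ — possible since $B$ is infinite, just take every other element sparsely. Then $\Delta(B')\cap[1,L]=\emptyset$, yet Theorem \ref{k} gives $x\in\Delta(B')\subseteq\Delta(B)$ with $\overline{d}(A\cap(A+x))\ge\alpha^2-\varepsilon$, i.e. $x\in S_\varepsilon$ and $x>L$, a contradiction. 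Hence $S_\varepsilon$ is infinite.

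Once infinitude of $S_\varepsilon$ is established for every $\varepsilon>0$, the two numbered consequences are immediate. For (1): given $k$, choose $\varepsilon=\alpha^2/2$ (or any $\varepsilon<\alpha^2$), so that every $x\in S_\varepsilon$ satisfies $\overline{d}(A\cap(A+x))>0$, which forces $A\cap(A+x)$ to be infinite and in particular $|A\cap(A+x)|\ge k$; thus $S_\varepsilon\subseteq R_k(A)\cap\Delta(B)$ and the latter is infinite. For (2): the set $S_\varepsilon$ being nonempty (indeed infinite) for every $\varepsilon>0$ means there exist $x\in\Delta(B)$ with $\overline{d}(A\cap(A+x))/\overline{d}(A)^2\ge 1-\varepsilon/\alpha^2$ for $x$ in arbitrarily large range, so $\limsup_{x\in\Delta(B)}\overline{d}(A\cap(A+x))/\overline{d}(A)^2\ge 1$.

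The main obstacle — really the only subtlety — is the passage from ``$\Delta(B')$ meets $S_\varepsilon$ for finite sparse $B'$'' to ``$\Delta(B)\cap S_\varepsilon$ is infinite.'' This is handled by the sparseness/contradiction argument above, exploiting that an infinite $B$ contains arbitrarily sparse finite subsets of any prescribed size, so the resulting difference lies outside any prescribed finite window; everything else is a direct quotation of Theorem \ref{k} plus the trivial fact that positive upper density implies infinitude.

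\begin{proof}
Write $\alpha=\overline{d}(A)>0$ and fix $\varepsilon>0$. Set
$$S_\varepsilon\ =\ \{x\mid\overline{d}(A\cap(A+x))\ge\alpha^2-\varepsilon\}\cap\Delta(B)\,.$$
We claim $S_\varepsilon$ is infinite. Suppose not, and let $L\in\N$ be such that $S_\varepsilon\subseteq[1,L]$. Since $B=\{b_n\}$ is infinite, we may choose indices $n_1<n_2<\ldots<n_\nu$ with $\nu\ge\alpha/\varepsilon$ and $b_{n_{i+1}}-b_{n_i}>L$ for each $i$; put $B'=\{b_{n_1}<\ldots<b_{n_\nu}\}$. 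Then every nonzero element of $\Delta(B')$ is a sum of consecutive gaps $b_{n_{i+1}}-b_{n_i}$, hence exceeds $L$, so $\Delta(B')\cap[1,L]=\emptyset$. On the other hand, applying Theorem \ref{k} to $A$ and $B'$ (which has $|B'|=\nu\ge\alpha/\varepsilon$) yields an element $x\in\Delta(B')\subseteq\Delta(B)$ with $\overline{d}(A\cap(A+x))\ge\alpha^2-\varepsilon$, that is, $x\in S_\varepsilon\subseteq[1,L]$. This contradicts $\Delta(B')\cap[1,L]=\emptyset$. Hence $S_\varepsilon$ is infinite, which is the first assertion.

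(1) Given $k\in\N$, take any $\varepsilon$ with $0<\varepsilon<\alpha^2$. For each $x\in S_\varepsilon$ we have $\overline{d}(A\cap(A+x))\ge\alpha^2-\varepsilon>0$, so in particular $A\cap(A+x)$ is infinite and $|A\cap(A+x)|\ge k$; thus $x\in R_k(A)\cap\Delta(B)$. Therefore $S_\varepsilon\subseteq R_k(A)\cap\Delta(B)$, and since $S_\varepsilon$ is infinite, so is $R_k(A)\cap\Delta(B)$.

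(2) For every $\varepsilon>0$ the (infinite, hence nonempty) set $S_\varepsilon$ contains some $x\in\Delta(B)$ with
$$\frac{\overline{d}(A\cap(A+x))}{\overline{d}(A)^2}\ \ge\ \frac{\alpha^2-\varepsilon}{\alpha^2}\ =\ 1-\frac{\varepsilon}{\alpha^2}\,.$$
Moreover, since each $S_\varepsilon$ is infinite, such $x$ can be found outside any prescribed finite set, so
$$\limsup_{x\in\Delta(B)}\frac{\overline{d}(A\cap(A+x))}{\overline{d}(A)^2}\ \ge\ 1-\frac{\varepsilon}{\alpha^2}$$
for every $\varepsilon>0$. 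Letting $\varepsilon\to 0$ gives the claim.
\end{proof}
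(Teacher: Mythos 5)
Your final proof is correct and follows essentially the paper's route: both derive the infinitude from Theorem \ref{k} by applying it to thinned-out subsets of $B$ whose nonzero differences are forced to be large (the paper uses $B^h=\{b_{hn}\}$ and the bound $b_{hs}-b_{ht}\ge h$, you use gap-$>L$ sparse subsets inside a contradiction), and parts (1) and (2) are handled identically. The exploratory false starts in your preamble are harmless since the proof you settle on is complete.
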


\begin{proof}
For every $h$, by applying the previous theorem to
$A$ and $B^h=\{b_{hn}\}$, one gets the existence of an 
element
$x_h=b_{hs}-b_{ht}\in\Delta(B^h)\subseteq\Delta(B)$
with $\overline{d}(A\cap(A+x_h))\ge\overline{d}(A)^2-\varepsilon$.
Notice that $x_h\ge hs-ht\ge h$. This proves
that there are arbitrarily large elements in the intersection
$\{x\mid \overline{d}(A\cap(A+x))\ge
\overline{d}(A)^2-\varepsilon\}\cap\Delta(B)$, as desired.

(1). Every set of positive upper density is infinite, and so, for every $k$,
the set $\{x\mid \overline{d}(A\cap(A+x))>
\overline{d}(A)^2-\varepsilon\}\subseteq R_k(A)$
whenever $0<\varepsilon<\overline{d}(A)^2$.

(2). By what proved above, for every $\varepsilon>0$
there are infinitely many elements $x\in\Delta(B)$
such that $\overline{d}(A+(A+x))\ge\overline{d}(A)^2-\varepsilon$; 
but then 
$\limsup_{x\in\Delta(B)}\overline{d}(A\cap (A+x))\ge
\overline{d}(A)^2-\varepsilon$.
Since $\varepsilon>0$ can be taken arbitrarily small, the thesis follows.
\end{proof}

\smallskip
Further on in this paper, we will show that a similar result as (2) 
can be proved for a large class of zero density sets (see Corollary \ref{main2}).

\medskip
\section{Intersection properties}

We saw in Theorem \ref{thm1} that
$\Delta(A)\cap\Delta(B)\ne\emptyset$ whenever both $A$ and $B$
are asymptotically larger than the set of squares.
We now sharpen that result, and prove a general 
intersection property that also applies when $b_n/n^2$ goes to infinity.

\smallskip
\begin{theorem}\label{maintheorem}
Let $A=\{a_n\}$ and $B=\{b_n\}$ be infinite sets of natural numbers
where $a_n\lll n^2$. Denote by $f(n)=a_n/n$ and by $g(n)=b_n/n$. 

\smallskip
\begin{enumerate}
\item
If there exists a constant $c>1$ such that
$$\liminf_{n\to\infty}\ \frac{g(\lfloor c\cdot f(n)\rfloor)}{n}\ <\ 1-\frac{1}{c}$$
then $R_k(A)\cap\Delta(B)\ne\emptyset$ for all $k$.

\smallskip
\item
If for arbitrarily large constants $c$ one has
$$\liminf_{n\to\infty}\ \frac{g(\lfloor c\cdot f(n)\rfloor)}{n}\ =\ 0$$
then $R_k(A)\cap\Delta(B)$ is infinite for all $k$.

\smallskip
\item
If there exists a constant $\varepsilon>0$ such that
$$\liminf_{n\to\infty}\frac{f(\lfloor\varepsilon\cdot b_n\rfloor)}{n}\ <\ 1$$
then $R_k(A)\cap\Delta(B)\ne\emptyset$ for all $k$.

\smallskip
\item
If there exists a constant $\varepsilon>0$ such that
$$\liminf_{n\to\infty}\frac{f(\lfloor\varepsilon\cdot b_n\rfloor)}{n}\ =\ 0$$
then $R_k(A)\cap\Delta(B)$ is infinite for all $k$.
\end{enumerate}
\end{theorem}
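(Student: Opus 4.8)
The plan is to derive all four statements from one common use of Lemma \ref{lemma}. First I would dispose of the positive density case: if $\overline{d}(A)>0$ then, by part (1) of Corollary \ref{kk}, every $R_k(A)\cap\Delta(B)$ is already infinite, which subsumes all four conclusions. So from now on assume $\overline{d}(A)=0$; since the limit superior defining the upper density is attained along the intervals $[1,a_n]$ (as recalled in the proof of Theorem \ref{k}), this means $n/a_n\to 0$, i.e.\ $f(n)\to\infty$. Together with the standing hypothesis $a_n\lll n^2$ (equivalently $f(n)/n\to 0$), these are the only facts about $A$ I will use.

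The reduction is the following: suppose we have index sequences $N_i\to\infty$ and $\nu_i\to\infty$ and a fixed $h\in\N$ with $\nu_i\ge 2h$, such that
\begin{enumerate}
\item[(i)] $a_{N_i}+b_{\nu_i}=o(N_i^2)$, and
\item[(ii)] $\displaystyle\limsup_{i\to\infty}\ \frac{(2h-1)\,(a_{N_i}+b_{\nu_i})}{N_i\,\nu_i}\ <\ 1$.
\end{enumerate}
Applying Lemma \ref{lemma} to $A_{N_i}=\{a_1<\ldots<a_{N_i}\}$ and $B_{\nu_i}=\{b_1<\ldots<b_{\nu_i}\}$, we get $x_i\in\Delta(B_{\nu_i})\subseteq\Delta(B)$ with $x_i\ge h$ and, since $A_{N_i}\subseteq A$,
$$|A\cap(A+x_i)|\ \ge\ |A_{N_i}\cap(A_{N_i}+x_i)|\ \ge\ \frac{N_i^2}{a_{N_i}+b_{\nu_i}}-\frac{(2h-1)N_i}{\nu_i}$$
$$=\ \frac{N_i^2}{a_{N_i}+b_{\nu_i}}\left(1-\frac{(2h-1)(a_{N_i}+b_{\nu_i})}{N_i\nu_i}\right).$$
By (i) the first factor tends to $\infty$, and by (ii) the second is eventually bounded below by a positive constant, so $|A\cap(A+x_i)|\to\infty$. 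Hence for each $k$ some $x_i$ lies in $R_k(A)\cap\Delta(B)$ with $x_i\ge h$; with $h=1$ this gives $R_k(A)\cap\Delta(B)\neq\emptyset$, and if the argument can be run for every $h$ we obtain arbitrarily large such $x_i$, so $R_k(A)\cap\Delta(B)$ is infinite.

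It remains to choose $N_i,\nu_i,h$ in each case. For (1) and (2) I would take $N_i=n_i$ along a subsequence realizing the $\liminf$ in the hypothesis and $\nu_i=\lfloor c f(n_i)\rfloor$ (so $\nu_i\to\infty$ since $f\to\infty$, hence $\nu_i\ge 2h$ eventually); writing $b_{\nu_i}=\nu_i\,g(\nu_i)$ with $g(\nu_i)=g(\lfloor c f(n_i)\rfloor)$, the hypothesis bounds $g(\lfloor cf(n_i)\rfloor)/n_i$, and with $\nu_i\le cf(N_i)$ this gives $b_{\nu_i}<C\,a_{N_i}$ for a constant $C$ and $i$ large, so (i) follows from $a_n\lll n^2$; moreover $N_i\nu_i\ge N_i(cf(N_i)-1)=c\,a_{N_i}-N_i$ makes the ratio in (ii) asymptotically at most $\frac{1+c\beta'}{c}$ in case (1) (with $h=1$ and $\beta'<1-\frac1c$) or $\frac{(2h-1)(1+c\eta)}{c}$ in case (2), both $<1$ by the respective hypotheses — for (2) one fixes, given $h$, a constant $c>2h$ with $\liminf_n g(\lfloor cf(n)\rfloor)/n=0$ and then takes $\eta$ small. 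For (3) and (4) I would instead take $\nu_i=n_i$ along a subsequence realizing the $\liminf$ and $N_i=\lfloor\varepsilon b_{n_i}\rfloor$ (so $N_i\to\infty$); now (i) is immediate since $a_{N_i}\lll N_i^2$ directly by the standing hypothesis and $b_{\nu_i}\le(N_i+1)/\varepsilon=o(N_i^2)$, while for (ii), from $a_{N_i}=N_i\,f(\lfloor\varepsilon b_{n_i}\rfloor)$ and the hypothesis on $f(\lfloor\varepsilon b_n\rfloor)/n$ one gets $a_{N_i}<(\gamma'+o(1))N_i\nu_i$ with $\gamma'<1$ in case (3) and $a_{N_i}<\eta N_i\nu_i$ for any prescribed $\eta$ in case (4), so the $\limsup$ in (ii) is $<1$ when $h=1$ in case (3) and is at most $(2h-1)\eta<1$ in case (4), provided $\eta$ is chosen below $\frac1{2(2h-1)}$ for the given $h$.

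I expect the main obstacle to be exactly the verification of (i) and (ii). One has to observe that in (3)--(4) the divergence needed in (i) is supplied by the standing hypothesis $a_n\lll n^2$ evaluated at the index $N_i=\lfloor\varepsilon b_{n_i}\rfloor$ — the bound on $f(\lfloor\varepsilon b_{n_i}\rfloor)$ alone is too weak to force $N_i^2/(a_{N_i}+b_{\nu_i})\to\infty$ — and that in (2) and (4) the auxiliary parameter ($c$, resp.\ $\eta$) must be tuned in terms of the target $h$ so that the growing coefficient $2h-1$ does not destroy (ii). The floor functions contribute only $o(1)$ errors, which the strict inequalities in the hypotheses absorb.
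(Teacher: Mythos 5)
Your proposal is correct and follows essentially the same route as the paper: after disposing of the positive-density case via Corollary \ref{kk}, you apply Lemma \ref{lemma} to the truncations $A_n$, $B_{\lfloor c\cdot f(n)\rfloor}$ for (1)--(2) and $A_{\lfloor\varepsilon\cdot b_n\rfloor}$, $B_n$ for (3)--(4), using $h=1$ for nonemptiness and arbitrary $h$ for infinitude. Your conditions (i)--(ii) are just an algebraic repackaging of the same lower bound the paper estimates (and taking $c>2h$ instead of $c=2h$ in (2) is an inessential variation), so the two arguments coincide.
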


\begin{proof}
In the following, without loss of generality,
we will always assume that $n\lll a_n$. Indeed, 
$n\lll a_n$ fails if and only if the upper asymptotic density $\overline{d}(A)$
is positive, and in this case the four properties above 
are all proved by Corollary \ref{kk}.

\smallskip
(1). Let
$$\liminf_{n\to\infty}\frac{g(\lfloor c\cdot f(n)\rfloor)}{n}\ =\ l\ <\ 1-\frac{1}{c}.$$
For every $n$, let $\tau(n)=\lfloor c\cdot f(n)\rfloor$,
and apply Lemma \ref{lemma} with $h=1$ to the sets
$A_n=\{a_1<\ldots<a_n\}$ and $B_{\tau(n)}=\{b_1<\ldots<b_{\tau(n)}\}$.
We obtain the existence of an element
$x_n\in\Delta(B_{\tau(n)})\subseteq\Delta(B)$ such that:
$$|A\cap(A+x_n)\cap[1,a_n+b_{\tau(n)}]|\ \ge\ |A_n\cap(A_n+x_n)|\ \ge\ 
\frac{n^2}{a_n}\cdot
\frac
{1-\frac{a_n+b_{\tau(n)}} {n\cdot \tau(n)}}
{1+\frac{b_{\tau(n)}}{a_n}}\,.$$
Since we are assuming $n\lll a_n$, we have that
$\lim_{n\to\infty}f(n)=\infty$, and so
$$\lim_{n\to\infty}
\frac{a_n}{n\cdot \tau(n)}\ =\
\lim_{n\to\infty}\frac{f(n)}{\lfloor c\cdot f(n)\rfloor}\ =\ \frac{1}{c}\,.$$
Besides,
$$\liminf_{n\to\infty}
\frac{b_{\tau(n)}}{n\cdot \tau(n)}\ =\
\liminf_{n\to\infty}\frac{g(\lfloor c\cdot f(n)\rfloor)}{n}\ =\ l\,,$$
and
$$\liminf_{n\to\infty}\frac{b_{\tau(n)}}{a_n}\ =\
\liminf_{n\to\infty}\frac{\lfloor c\cdot f(n)\rfloor}{f(n)}\cdot
\frac{g(\lfloor c\cdot f(n)\rfloor)}{n}\ =\ c\cdot l\,.$$
Notice that the two limit inferiors above are attained along the same
subsequence, and so
$$\limsup_{n\to\infty}
\frac
{1-
\frac{a_n+b_{\tau(n)}}{n\cdot \tau(n)}}
{1+\frac{b_{\tau(n)}}{a_n}}\ =\
\frac{1-\left(\frac{1}{c}+l\right)}{1+c\cdot l}\ >\ 0\,.$$
By using the hypothesis $a_n\lll n^2$, \emph{i.e.}
$\lim_{n\to\infty}{n^2}/{a_n}=\infty$,
we can then conclude that
$$\limsup_{n\to\infty}|A\cap(A+x_n)\cap[1,a_n+b_{\tau(n)}]|\ =\ \infty\,.$$
This shows that for every $k$ one finds elements $x_n\in\Delta(B)$
such that $|A\cap(A+x_n)|\ge k$, and hence 
$R_k(A)\cap\Delta(B)\ne\emptyset$.\footnote
{~We remark that the map $n\mapsto x_n$ may not be 1-1,
and so the above argument \emph{does not} prove that
$R_k(A)\cap\Delta(B)$ contains infinitely many elements.}

\smallskip
(2). Fix $h>1$. For every $n$, let
$\tau(n)=\lfloor 2h\cdot f(n)\rfloor$, and apply 
Lemma \ref{lemma} to the sets $A_n$ and $B_{\tau(n)}$ so as
to get the existence of 
an element $x_n\in\Delta(B_{\tau(n)})\subseteq\Delta(B)$ such that $x_n\ge h$ and
$$|A\cap(A+x_n)\cap[1,a_n+b_{\tau(n)}|\ \ge\
\frac{n^2}{a_n}\cdot
\frac
{1-\frac{(a_n+b_{\tau(n)})(2h-1)} {n\cdot \tau(n)}}
{1+\frac{b_{\tau(n)}}{a_n}}.$$
Now use the same arguments as in the proof of 
the previous property (1). Since in our case $c=2h$ and $l=0$, we obtain that
$$\limsup_{n\to\infty}\,\frac
{1-\frac{(a_n+b_{\tau(n)})(2h-1)} {n\cdot \tau(n)}}
{1+\frac{b_{\tau(n)}}{a_n}}\ =\ 
\frac
{1-\left(\frac{1}{c}+l\right)(2h-1)}
{1+c\cdot l}\ =\ 1-\frac{2h-1}{2h}\ >\ 0.$$
By the hypothesis $a_n\lll n^2$, we conclude that
$$\limsup_{n\to\infty}\,|A\cap(A+x_n)\cap[1,a_n+b_{\tau(n)}]|\ =\ \infty.$$
So, for every $k$, there exist elements $x_n\in\Delta(B_n)\subseteq\Delta(B)$ 
such that $x_n\ge h$
and $|A\cap(A+x_n)|\ge k$. Since $h$ is arbitrary, this shows
that the intersection $R_k(A)\cap\Delta(B)$ is infinite, as desired.

\smallskip
(3). The proof is entirely similar to the proof of (1), by
applying Lemma \ref{lemma} to the sets
$A_{\sigma(n)}$ and $B_n$ where $\sigma(n)=\lfloor\varepsilon\cdot b_n\rfloor$.
Indeed, notice that
$$\liminf_{n\to\infty}\frac{a_{\sigma(n)}}{\sigma(n)\cdot n}\ =\ 
\liminf_{n\to\infty}\frac{f(\lfloor\varepsilon\cdot b_n\rfloor)}{n}\ =\ l\ <\ 1.$$
Besides,
$$\lim_{n\to\infty}\frac{b_n}{\sigma(n)}\ =\ 
\lim_{n\to\infty}\frac{b_n}{\lfloor\varepsilon\cdot b_n\rfloor}\ =\ 
\frac{1}{\varepsilon}\ <\ \infty\,,$$
and so
$$\lim_{n\to\infty}\frac{b_n}{\sigma(n)\cdot n}\ =\ 0\quad
\text{and}\quad
\lim_{n\to\infty}\frac{b_n}{a_{\sigma(n)}}\ =\ 
\lim_{n\to\infty}\frac{b_n}{\sigma(n)}\cdot
\frac{\sigma(n)}{a_{\sigma(n)}}\ =\ 0.$$
Thus we have the existence of elements $x_n\in\Delta(B)$ such that
$$\limsup_{n\to\infty}|A\cap(A+x_n)\cap[1,a_{\sigma(n)}+b_n]|\ \ge\ 
\limsup_{n\to\infty}
\frac{\sigma(n)^2}{a_{\sigma(n)}}\cdot
\frac{1-
\frac{a_{\sigma(n)}+b_n}{\sigma(n)\cdot n}}
{1+\frac{b_n}{a_{\sigma(n)}}}\ =\ \infty\,,$$
and the thesis follows.

\smallskip
(4).  For fixed $h>1$, we proceed as in (3) 
and obtain the existence of elements $x_n\in\Delta(B_n)\subseteq\Delta(B)$ 
with $x_n\ge h$ and such that
$$\limsup_{n\to\infty}|A\cap(A+x_n)\cap[1,a_{\sigma(n)}+b_n]|\ \ge\ 
\limsup_{n\to\infty}
\frac{\sigma(n)^2}{a_{\sigma(n)}}\cdot
\frac{1-
\frac{(a_{\sigma(n)}+b_n)(2h-1)}{\sigma(n)\cdot n}}
{1+\frac{b_n}{a_{\sigma(n)}}}\,.$$
As we are assuming $l=0$, the above limit superior is infinite.
Finally, since $h$ can be taken arbitrarily large, the thesis follows.
\end{proof}

\smallskip
\begin{remark}
Under the (mild) hypothesis that $g(n)$ be non-decreasing, one can
prove (3) and (4) as consequences of (1) and (2),
which are therefore basically stronger properties.
Indeed, given $\varepsilon>0$,
let us assume that $\tau(n)=f(\lfloor\varepsilon\cdot b_n\rfloor)/n$
satisfies the condition $\liminf_{n\to\infty}\tau(n)=l<1$.
Then for every constant $c$ such that $c\cdot l<1$, we have
that $c\cdot\tau(n)\cdot n<n$ for infinitely many $n$, and so
$$\liminf_{n\to\infty}\frac{g(\lfloor c\cdot f(n)\rfloor)}{n}\ \le\ 
\liminf_{n\to\infty}
\frac{g(\lfloor c\cdot f(\lfloor\varepsilon\cdot b_n\rfloor)\rfloor)}
{\lfloor\varepsilon\cdot b_n\rfloor}
 =$$
 $$=\ \liminf_{n\to\infty}\frac{g(\lfloor c\cdot \tau(n)\cdot n\rfloor)}
 {\lfloor\varepsilon\cdot n\cdot g(n)\rfloor}\ =\ 
 \frac{1}{\varepsilon}\cdot\liminf_{n\to\infty}
\frac{g(\lfloor c\cdot \tau(n)\cdot n\rfloor)}{n\cdot g(n)}\ \le$$
$$\le\ \frac{1}{\varepsilon}\cdot\liminf_{n\to\infty}\frac{g(n)}{n\cdot g(n)}\ =\ 0\,.$$
Notice that, since $l<1$, we can pick constants $c>1$
such that $c\cdot l<1$, and this completes the proof of $(1)\Rightarrow(3)$.
Besides, if $l=0$, every constant $c>1$ trivially satisfies $c\cdot l<1$,
and also $(2)\Rightarrow(4)$ follows.
\end{remark}

\smallskip
As a consequence of the previous theorem,
one can isolate a large class of sets 
$B$ such that $R_k(A)\cap\Delta(B)\ne\emptyset$,
in terms of their density relative to $A$.
 
\smallskip
\begin{corollary}\label{cor1}
Let $A=\{a_n=n\cdot f(n)\}$ be an infinite set of natural numbers
where $f:\R^+\to\R^+$ is an increasing 
unbounded function, and assume that
the infinite set of natural numbers $B=\{b_n\}$ is such that
$$\lim_{n\to\infty}\,\frac{b_n/n}{f^{-1}(\varepsilon\cdot n)}\ =\ 0
\quad\text{for all }\varepsilon>0\,.$$
Then intersections $R_k(A)\cap\Delta(B)$ are infinite for all $k$.
\end{corollary}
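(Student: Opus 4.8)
The plan is to derive Corollary \ref{cor1} from part (4) of Theorem \ref{maintheorem} (or equivalently from the implication $(2)\Rightarrow(4)$-type reasoning), by translating the hypothesis on $b_n/n$ into the condition $\liminf_{n\to\infty} f(\lfloor \varepsilon b_n\rfloor)/n = 0$ for a suitable $\varepsilon$. First I would record that $f$ being increasing and unbounded makes $f^{-1}$ well-defined, increasing and unbounded on its range, so inequalities may be freely inverted by applying $f$ or $f^{-1}$ to both sides. I would also note at the outset that $a_n = n\cdot f(n) \lll n^2$ since $f(n)/n \to 0$ would be needed—wait, that is \emph{not} automatic, so the first genuine check is that $a_n \lll n^2$, i.e. $f(n)\lll n$; this follows because the hypothesis with, say, $\varepsilon = 1$ forces $b_n/n \lll f^{-1}(n)$, hence $f^{-1}(n) \to \infty$ faster than $b_n/n \to \infty$, and in particular $f^{-1}(n)\to\infty$, which gives $f(m)/m \to 0$ along the values $m = f^{-1}(n)$; a short monotonicity argument upgrades this to $f(n)\lll n$ for all $n$. (If instead $\overline d(A)>0$, Corollary \ref{kk} already gives the conclusion, so this case may be set aside as in the proof of Theorem \ref{maintheorem}.)

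The core step is the following computation. Fix $\varepsilon > 0$ arbitrarily; I will show $\liminf_{n\to\infty} f(\lfloor \varepsilon b_n\rfloor)/n = 0$, which by part (4) of Theorem \ref{maintheorem} yields that $R_k(A)\cap\Delta(B)$ is infinite for all $k$. Set $\sigma(n) = \lfloor \varepsilon b_n\rfloor$. The hypothesis says $b_n/n \lll f^{-1}(\delta n)$ for every $\delta > 0$; applying this with a well-chosen $\delta$ (to be calibrated against $\varepsilon$) gives, for all large $n$, the inequality $\varepsilon b_n \le \varepsilon n \cdot f^{-1}(\delta n)$—more precisely $b_n \le n f^{-1}(\delta n)$ eventually, so $\sigma(n) \le n f^{-1}(\delta n)$. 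Applying the increasing function $f$ and using $f(f^{-1}(t)) \le t$ (up to the usual care with floors and the fact that $f^{-1}$ is a genuine inverse on the relevant range), one gets $f(\sigma(n)) \le f(n f^{-1}(\delta n))$. The delicate point is that $f(n f^{-1}(\delta n))$ need not be comparable to $n f(f^{-1}(\delta n)) = \delta n \cdot n$; since $f$ is only assumed increasing, one cannot pull the factor $n$ through. So the argument must instead be run the other way: rather than bounding $f(\sigma(n))$ from above directly, show that for \emph{infinitely many} $n$ one has $\sigma(n) < f^{-1}(\delta n)$ for a fixed small $\delta$, whence $f(\sigma(n)) < \delta n$ and thus $f(\sigma(n))/n < \delta$ infinitely often. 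Letting $\delta \to 0$ (with a diagonal choice of $n$) produces the desired $\liminf = 0$. The inequality $\sigma(n) < f^{-1}(\delta n)$ for infinitely many $n$ is exactly what the hypothesis $b_n/n \lll f^{-1}(\delta n)$ delivers, since it forces $\varepsilon b_n = o(n f^{-1}(\delta n))$ hence certainly $\le f^{-1}(\delta n)$ along a subsequence once $\delta$ is fixed (in fact for all large $n$, which is even better).

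I would then assemble this: for each $j\in\N$ pick $\delta = 1/j$; the hypothesis gives $n_j$ with $\lfloor \varepsilon b_{n_j}\rfloor \le f^{-1}(n_j/j)$ and $n_j \to \infty$, so $f(\lfloor\varepsilon b_{n_j}\rfloor)/n_j \le 1/j \to 0$, proving $\liminf_n f(\lfloor \varepsilon b_n\rfloor)/n = 0$. Since $\varepsilon$ was arbitrary, part (4) of Theorem \ref{maintheorem} applies and gives that $R_k(A)\cap\Delta(B)$ is infinite for every $k$, which is the assertion of the corollary. The main obstacle, as indicated, is purely bookkeeping: handling the floor functions and the fact that $f^{-1}$ is only a one-sided inverse ($f(f^{-1}(t))$ may be slightly below $t$, and $f^{-1}(f(m))$ may differ from $m$) when $f$ is not continuous or not strictly defined on all of $\R^+$; this is dispatched by replacing equalities with the correct inequalities $f(f^{-1}(t)) \le t$ and monotonicity, and by noting that a discrepancy of at most $1$ in $\sigma(n)$ is absorbed for $n$ large. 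A secondary point to state cleanly is the reduction to the case $n\lll a_n$, handled verbatim as in the proof of Theorem \ref{maintheorem} via Corollary \ref{kk}.
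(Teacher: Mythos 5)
There is a genuine gap at the heart of your reduction. You need, for a fixed $\varepsilon>0$ and arbitrarily small $\delta>0$, the inequality $\lfloor\varepsilon b_n\rfloor < f^{-1}(\delta n)$ for infinitely many $n$, and you assert this ``is exactly what the hypothesis $b_n/n\lll f^{-1}(\delta n)$ delivers''. It is not: the hypothesis only gives $b_n\le \eta\, n\cdot f^{-1}(\delta n)$ eventually (for every $\eta>0$), which is weaker than the bound you need by precisely the factor $n$ that you yourself identified as the obstruction a few lines earlier; the inference ``$\varepsilon b_n=o(n\, f^{-1}(\delta n))$ hence certainly $\le f^{-1}(\delta n)$ along a subsequence'' is a non sequitur. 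Moreover the problem is not only in that step: the whole strategy of routing the Corollary through parts (3)/(4) of Theorem \ref{maintheorem} cannot succeed. Take $f(n)=\sqrt n$ (so $a_n\approx n^{3/2}$) and $b_n=\lfloor n^{5/2}\rfloor$: then $\frac{b_n/n}{f^{-1}(\delta n)}\approx\frac{n^{3/2}}{\delta^2 n^{2}}\to 0$ for every $\delta>0$, so the Corollary's hypothesis holds, yet $f(\lfloor\varepsilon b_n\rfloor)/n\approx\sqrt{\varepsilon}\,n^{1/4}\to\infty$ for every $\varepsilon>0$, so the liminf conditions of (3) and (4) fail outright. The paper instead uses part (2): fix $c>1$, set $\tau(n)=\lfloor c\cdot f(n)\rfloor$ and $\varepsilon=1/c$; then $\varepsilon\tau(n)\le f(n)$ gives $f^{-1}(\varepsilon\tau(n))\le n$, hence $\frac{g(\tau(n))}{n}\le\frac{b_{\tau(n)}/\tau(n)}{f^{-1}(\varepsilon\tau(n))}\to 0$, and since $c>1$ is arbitrary, (2) yields that $R_k(A)\cap\Delta(B)$ is infinite for all $k$. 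The key point you missed is that the hypothesis must be evaluated at an index $\tau(n)$ manufactured from $A$, where the denominator $n$ reappears naturally as $f^{-1}(\varepsilon\tau(n))$, rather than at an index $\lfloor\varepsilon b_n\rfloor$ manufactured from $B$.

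A secondary flaw: your preliminary argument that $a_n\lll n^2$ (equivalently $f(n)\lll n$) follows from the hypotheses is also unsound. That $f^{-1}(n)\to\infty$ holds for every increasing unbounded $f$ (e.g.\ $f(n)=n^2$) and gives no control on $f(n)/n$; your claim that $f(m)/m\to 0$ along $m=f^{-1}(n)$ is equivalent to what you are trying to prove. In fairness, the paper's own proof is silent on this point and simply inherits the hypothesis $a_n\lll n^2$ from Theorem \ref{maintheorem}, but your proposed justification of it does not work.
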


\begin{proof}
Fix $c>1$, and let $\tau(n)=\lfloor c\cdot f(n)\rfloor$ and $\varepsilon=1/c$.
Then $f^{-1}(\varepsilon\cdot\tau(n))\le n$ and
$$0\ \le\ \lim_{n\to\infty}\frac{g(\lfloor c\cdot f(n)\rfloor)}{n}\ \le\ 
\lim_{n\to\infty}\frac{g(\tau(n))}{f^{-1}(\varepsilon\cdot\tau(n))}\ =\ 
\lim_{n\to\infty}\frac{{b_{\tau(n)}}/{\tau(n)}}{f^{-1}(\varepsilon\cdot\tau(n))}\ =\ 0\,.$$
Thus (2) of the previous Theorem applies, and we get the thesis.
\end{proof}

\smallskip
When $\varepsilon=1$, items (3) and (4) in Theorem \ref{maintheorem}
have the advantage that can be reformulated in the following simpler form:

\smallskip
\begin{corollary}\label{abn}
Let $A=\{a_n\}$ and $B=\{b_n\}$ be infinite sets of natural numbers
where $a_n\lll n^2$, and let 
$$\liminf_{n\to\infty}\,\frac{a_{b_n}}{n\cdot b_n}\ =\ l\,.$$
If $l<1$ then $R_k(A)\cap\Delta(B)\ne\emptyset$ for all $k$; 
and if $l=0$ then $R_k(A)\cap\Delta(B)$ is infinite for all $k$.
\end{corollary}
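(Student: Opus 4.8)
The plan is to obtain this as an immediate specialization of items (3) and (4) of Theorem \ref{maintheorem}, taking the constant $\varepsilon$ there to be $\varepsilon=1$. First I would unwind the notation: in that theorem one writes $f(n)=a_n/n$, so that for \emph{any} natural number $m$ one has $f(m)=a_m/m$. Applying this with $m=b_n$ yields $\frac{f(b_n)}{n}=\frac{a_{b_n}/b_n}{n}=\frac{a_{b_n}}{n\cdot b_n}$, which is exactly the ratio whose limit inferior is called $l$ in the statement.

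The one point that requires (minor) care is that the floor appearing in Theorem \ref{maintheorem} genuinely disappears when $\varepsilon=1$: since every $b_n$ is a natural number, $\lfloor \varepsilon\cdot b_n\rfloor=\lfloor b_n\rfloor=b_n$. Consequently $\liminf_{n\to\infty}\frac{f(\lfloor\varepsilon\cdot b_n\rfloor)}{n}=\liminf_{n\to\infty}\frac{f(b_n)}{n}=\liminf_{n\to\infty}\frac{a_{b_n}}{n\cdot b_n}=l$. The standing hypothesis $a_n\lll n^2$ is assumed in the corollary, so Theorem \ref{maintheorem} is directly applicable with no further restrictions.

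It then remains only to read off the two conclusions. If $l<1$, the hypothesis of item (3) of Theorem \ref{maintheorem} holds (with $\varepsilon=1$), giving $R_k(A)\cap\Delta(B)\ne\emptyset$ for all $k$; and if $l=0$, the hypothesis of item (4) holds, giving that $R_k(A)\cap\Delta(B)$ is infinite for all $k$. Since the corollary is essentially a cosmetic repackaging of the $\varepsilon=1$ instance of the theorem, I expect no genuine obstacle; the only thing to watch is the bookkeeping — applying the identity $f(m)=a_m/m$ at the correct index $m=b_n$, and observing that the floor is vacuous here precisely because $B\subseteq\N$.
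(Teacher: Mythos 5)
Your proposal is correct and coincides with the paper's own (implicit) argument: the corollary is stated as the $\varepsilon=1$ case of items (3) and (4) of Theorem \ref{maintheorem}, and your observations that $\lfloor 1\cdot b_n\rfloor=b_n$ and $f(b_n)/n=a_{b_n}/(n\cdot b_n)$ are exactly the intended bookkeeping. Nothing further is needed.
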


\smallskip
A consequence that is easily applied in several examples is the following:

\smallskip
\begin{corollary}\label{main1}
Given a function $\vartheta:\N\to\R^+$ and infinite sets of natural numbers
$A=\{a_n\}$ and $B=\{b_n\}$, denote by:
$$\liminf_{n\to\infty}\frac{a_n}{n\cdot\vartheta(n)}=\underline{\ell}\,;\quad
\limsup_{n\to\infty}\frac{a_n}{n\cdot\vartheta(n)}=\overline{\ell}\,;$$
$$\liminf_{n\to\infty}\frac{\vartheta(b_n)}{n}=\underline{\ell}\,'\,;\quad
\limsup_{n\to\infty}\frac{\vartheta(b_n)}{n}=\overline{\ell}\,'\,.$$
If $\underline{\ell}\cdot \overline{\ell}\,'<1$ or 
$\overline{\ell}\cdot \underline{\ell}\,'<1$ then $R_k(A)\cap\Delta(B)\ne\emptyset$ for all $k$;
and if $\underline{\ell}\cdot\overline{\ell}\,'=0$
or $\underline{\ell}\cdot\overline{\ell}\,'=0$
then $R_k(A)\cap\Delta(B)$ is infinite for all $k$.\footnote
{~By writing $\underline{\ell}\cdot \overline{\ell}\,'<1$ or
$\underline{\ell}\cdot \overline{\ell}\,'=0$, it is implicitly
assumed that both $\underline{\ell}$ and $\overline{\ell}\,'$ are finite; and
similarly in the other cases.}
\end{corollary}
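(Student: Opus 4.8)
The plan is to reduce Corollary~\ref{main1} to the previously established Corollary~\ref{abn} (or, equivalently, to items (3) and (4) of Theorem~\ref{maintheorem}) by estimating the quantity $\liminf_n \frac{a_{b_n}}{n\cdot b_n}$ that appears there. First I would dispose of the degenerate case: if $n \lll a_n$ fails, then $\overline{d}(A)>0$ and all the conclusions follow from Corollary~\ref{kk}; so assume $n\lll a_n$, which in particular forces $\vartheta$ to be unbounded along $B$ in the relevant sense. The key computation is to factor
$$\frac{a_{b_n}}{n\cdot b_n}\ =\ \frac{a_{b_n}}{b_n\cdot\vartheta(b_n)}\cdot\frac{\vartheta(b_n)}{n}.$$
The first factor is $\frac{a_m}{m\cdot\vartheta(m)}$ evaluated at $m=b_n$, a subsequence of the sequence whose $\liminf$ is $\underline{\ell}$ and whose $\limsup$ is $\overline{\ell}$; the second factor is exactly the sequence whose $\liminf$ is $\underline{\ell}\,'$ and whose $\limsup$ is $\overline{\ell}\,'$.

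The main obstacle is that $\liminf$ is not multiplicative, so from this factorization one cannot in general conclude that $l = \liminf_n \frac{a_{b_n}}{n\cdot b_n}$ equals $\underline{\ell}\cdot\underline{\ell}\,'$ or anything so clean. What one \emph{can} extract is an upper bound by choosing a good subsequence: pick a subsequence along which $\frac{\vartheta(b_n)}{n}\to\underline{\ell}\,'$; along a further subsequence $\frac{a_{b_n}}{b_n\cdot\vartheta(b_n)}$ has a $\limsup$ that is at most $\overline{\ell}$ (being a sub-subsequence of a sequence with $\limsup\le\overline{\ell}$), and passing to that sub-subsequence makes it converge to some value $\le\overline{\ell}$. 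Hence
$$l\ =\ \liminf_{n\to\infty}\frac{a_{b_n}}{n\cdot b_n}\ \le\ \overline{\ell}\cdot\underline{\ell}\,'.$$
Symmetrically, choosing first a subsequence along which $\frac{a_{b_n}}{b_n\cdot\vartheta(b_n)}\to\underline{\ell}$ and then a sub-subsequence along which $\frac{\vartheta(b_n)}{n}$ converges (necessarily to a value $\le\overline{\ell}\,'$) gives $l\le\underline{\ell}\cdot\overline{\ell}\,'$. Therefore in all cases
$$l\ \le\ \min\{\underline{\ell}\cdot\overline{\ell}\,',\ \overline{\ell}\cdot\underline{\ell}\,'\}.$$

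With this inequality in hand the corollary is immediate: if $\underline{\ell}\cdot\overline{\ell}\,'<1$ or $\overline{\ell}\cdot\underline{\ell}\,'<1$, then $l<1$, and Corollary~\ref{abn} (valid since $a_n\lll n^2$) gives $R_k(A)\cap\Delta(B)\ne\emptyset$ for all $k$; if one of these products is $0$, then $l=0$ and Corollary~\ref{abn} gives that $R_k(A)\cap\Delta(B)$ is infinite for all $k$. (I would also note that the hypothesis $\overline{\ell}<\infty$, implicit whenever it appears in a product, together with $a_n\lll n^2$, is exactly what is needed: $a_n\le n\cdot\vartheta(n)\cdot(\overline{\ell}+o(1))$ combined with $a_n\lll n^2$ is consistent, and Corollary~\ref{abn} only requires $a_n\lll n^2$ outright, which is assumed.) The only subtlety worth spelling out carefully in the write-up is the diagonal/subsequence juggling for the $\liminf$ bound — everything else is bookkeeping.
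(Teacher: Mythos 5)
Your proposal is correct and follows essentially the same route as the paper: factor $\frac{a_{b_n}}{n\,b_n}=\frac{a_{b_n}}{b_n\vartheta(b_n)}\cdot\frac{\vartheta(b_n)}{n}$, bound $\liminf_n\frac{a_{b_n}}{n\,b_n}$ by both $\underline{\ell}\cdot\overline{\ell}\,'$ and $\overline{\ell}\cdot\underline{\ell}\,'$, and feed the result into Corollary~\ref{abn}; the paper states these two inequalities in one line, and your subsequence argument (and the reduction of the $\overline{d}(A)>0$ case to Corollary~\ref{kk}) merely spells out the same reasoning in more detail.
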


\begin{proof}
It is a direct application of Corollary \ref{abn}. Indeed, if $\underline{\ell}$
and $\overline{\ell}\,'$ are finite, then 
$$\liminf_{n\to\infty}\frac{a_{b_n}}{n\cdot b_n}\ \le\
\liminf_{n\to\infty}\frac{a_{b_n}}{b_n\cdot\vartheta(b_n)}\cdot
\limsup_{n\to\infty}\frac{\vartheta(b_n)}{n}\ \le\ \underline{\ell}\cdot\overline{\ell}\,'\,;$$
and if $\overline{\ell}$ and $\underline{\ell}\,'$ are finite, then 
$$\liminf_{n\to\infty}\frac{a_{b_n}}{n\cdot b_n}\ \le\
\limsup_{n\to\infty}\frac{a_{b_n}}{b_n\cdot\vartheta(b_n)}\cdot
\liminf_{n\to\infty}\frac{\vartheta(b_n)}{n}\ \le\ \overline{\ell}\cdot\underline{\ell}\,'\,.$$
\end{proof}

\smallskip
As witnessed by the results proved above, 
if $A$ has zero density but still it is ``large" enough,
then its set of distances intersect sets of distances of
really ``sparse'' sets $B$.
We give below two examples to illustrate this phenomenon.

\smallskip
\begin{example}
Let $P=\{p_n\}$ be the set of prime numbers, and
let $B=\{2^n\}$ be the set of powers of $2$. By the 
\emph{Prime Number Theorem}, 
$$\lim_{n\to\infty}\frac{p_n}{n\cdot\log n}\ =\ 1.$$
Since $(\log 2^n)/n=\log 2<1$, by the previous corollary
we can conclude 
that for every $k$, there exist numbers of the form
$2^m-2^n$ which are the distance of at least $k$-many pairs
of primes. Actually, there exist infinitely many 
such numbers, since the function $(n,m)\mapsto 2^m-2^n$ is 1-1;
indeed, first pick $2^{n_1}-2^{m_1}\in R_k(P)\cap\Delta(B)$, then 
consider $B^{(1)}=B\setminus\{2^{n_1},2^{n_2}\}$ and pick
$2^{n_2}-2^{m_2}\in R_k(P)\cap\Delta(B^{(1)})$, and so forth.
\end{example}

\smallskip
\begin{example}\label{ex1}
Let $A=\{a_n\}$ and $B=\{b_n\}$ be infinite sets of natural numbers
such that
$$\sum_{n=1}^\infty\frac{1}{a_n}\ =\ \infty\quad\text{and}\quad
\log b_n\ \lll\ n^{1-\varepsilon}\ \
\text{for some}\ \varepsilon>0\,.$$
Then $R_k(A)\cap\Delta(B)$ is infinite for all $k$.
\end{example}

\begin{proof}
If we let $\vartheta(n)=(\log n)^{\frac{1}{1-\varepsilon}}$,
the hypotheses imply that
$$\liminf_{n\to\infty}\frac{a_n}{n\cdot\vartheta(n)}=0\quad\text{and}\quad
\limsup_{n\to\infty}\frac{\vartheta(b_n)}{n}=
\left(\limsup_{n\to\infty}\frac{\log b_n}{n^{1-\varepsilon}}\right)^{\frac{1}{1-\varepsilon}}=0\,,$$
and the desired intersection property
follows by Corollary \ref{main1}.
\end{proof}

\smallskip
\emph{E.g.}, if $A=\{a_n\}$ is such that $\sum_{n=1}^\infty\frac{1}{a_n}=\infty$,
then for every exponent $\alpha<1$ and for every $k$, there exist infinitely many
numbers of the form $\lfloor 10^{n^\alpha}\rfloor - \lfloor 10^{m^\alpha}\rfloor$,
everyone of which is the distance of at least
$k$-many different pairs of elements of $A$.

\smallskip
Let us now focus on powers of $n$. 

\smallskip
\begin{theorem}\label{powersofn}
Let $A=\{a_n\}$ and $B=\{b_n\}$ be infinite sets of natural numbers
such that, for all sufficiently large $n$, 
$$a_n\le K\cdot n^{1+\alpha}\quad\text{and}\quad b_n\le M\cdot n^{1+\beta}\,.$$

\begin{enumerate}
\item
If $\alpha<1$ and $\beta<1/\alpha$ then $R_k(A)\cap\Delta(B)$ is infinite for all $k$.

\smallskip
\item
If $\alpha<1$ and $\beta=1/\alpha$ 
then $R_k(A)\cap\Delta(B)\ne\emptyset$ for all $k$ whenever  
$K^\beta M<\frac{\alpha}{\left(1+\alpha\right)^{\beta+1}}$.

\item
If $\alpha=\beta=1$ then $\Delta(A)\cap\Delta(B)\ne\emptyset$ 
whenever $KM<\frac{1}{4}$.
\end{enumerate}
\end{theorem}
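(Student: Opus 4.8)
The plan is to deduce (1) and (2) from Theorem \ref{maintheorem} and (3) from the pigeonhole Proposition \ref{prop1}. Keep the notation $f(n)=a_n/n$ and $g(n)=b_n/n$ of Theorem \ref{maintheorem}, so the hypotheses read $f(n)\le Kn^{\alpha}$ and $g(n)\le Mn^{\beta}$ for all large $n$. In items (1) and (2) the condition $\alpha<1$ gives $a_n\le Kn^{1+\alpha}\lll n^{2}$, so Theorem \ref{maintheorem} applies; moreover the conclusions of (1) and (2) are immediate from Corollary \ref{kk} when $\overline{d}(A)>0$, so we may assume $\overline{d}(A)=0$, i.e. $f(n)\to\infty$. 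The only delicate point is that in (2) one must invoke item (1) of Theorem \ref{maintheorem}, which feeds the slowly growing index $\lfloor c\,f(n)\rfloor$ into $B$; item (3) of that theorem, which instead indexes $A$ by $\lfloor\varepsilon\,b_n\rfloor$, does not help here, since $b_n$ may be of order $n^{1+\beta}$ with $\beta\ge 1$ and then $a_{\lfloor\varepsilon b_n\rfloor}$ is too large to control.

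For (1), fix an arbitrary constant $c$. For $n$ large, $\lfloor c\,f(n)\rfloor$ is large, and
$$\frac{g(\lfloor c\,f(n)\rfloor)}{n}\ \le\ \frac{M\,\lfloor c\,f(n)\rfloor^{\beta}}{n}\ \le\ \frac{M\,(cK n^{\alpha})^{\beta}}{n}\ =\ M(cK)^{\beta}\,n^{\alpha\beta-1}\,,$$
which tends to $0$ because $\alpha\beta<1$. Hence $\liminf_{n}g(\lfloor c\,f(n)\rfloor)/n=0$ for every (in particular, arbitrarily large) $c$, and item (2) of Theorem \ref{maintheorem} gives that $R_k(A)\cap\Delta(B)$ is infinite for all $k$.

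For (2) the same chain of inequalities applies, but now $\alpha\beta=1$, so it only yields $\liminf_{n}g(\lfloor c\,f(n)\rfloor)/n\le MK^{\beta}c^{\beta}$. To apply item (1) of Theorem \ref{maintheorem} it therefore suffices to produce a single $c>1$ with $MK^{\beta}c^{\beta}<1-\tfrac1c$, equivalently $MK^{\beta}<\frac{c-1}{c^{\beta+1}}$. An elementary one-variable optimization shows that $c\mapsto\frac{c-1}{c^{\beta+1}}$ attains its maximum over $(1,\infty)$ at $c=1+\tfrac1\beta=1+\alpha$, where it equals $\frac{\alpha}{(1+\alpha)^{\beta+1}}$. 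Thus the hypothesis $K^{\beta}M<\frac{\alpha}{(1+\alpha)^{\beta+1}}$ is exactly what permits the choice $c=1+\alpha$ (or any nearby value), and item (1) of Theorem \ref{maintheorem} gives $R_k(A)\cap\Delta(B)\ne\emptyset$ for all $k$.

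For (3), by Proposition \ref{prop1} it suffices to exhibit $N,\nu$ with $a_N+b_\nu\le N\nu$. Since $a_N\le KN^{2}$ and $b_\nu\le M\nu^{2}$ once $N,\nu$ are large, it is enough that $KN^{2}+M\nu^{2}<N\nu$; writing $\nu=\lambda N$ this becomes $M\lambda^{2}-\lambda+K<0$, which has an open interval of solutions $\lambda$ precisely because the discriminant $1-4KM$ is positive by hypothesis. Choosing a rational $\lambda=p/q$ in that interval and setting $N=qm$, $\nu=pm$ for a large integer $m$, we get $a_N+b_\nu\le m^{2}(Kq^{2}+Mp^{2})<m^{2}pq=N\nu$, so Proposition \ref{prop1} yields $\Delta(A)\cap\Delta(B)\ne\emptyset$. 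Beyond this rational approximation and the routine ``for all large $n$'' bookkeeping, the substantive step is recognizing the sharp constant in (2) as $\max_{c>1}\frac{c-1}{c^{\beta+1}}$.
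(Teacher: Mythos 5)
Your proposal is correct. Parts (1) and (2) follow the paper's own route essentially verbatim: the paper likewise reduces to the case $\overline{d}(A)=0$ (so that Corollary \ref{kk} covers the positive-density case and $f(n)\to\infty$), bounds $g(\lfloor c f(n)\rfloor)/n$ by $M(cK)^{\beta}n^{\alpha\beta-1}$, and in (2) optimizes $\frac{c-1}{c^{\beta+1}}$ at $c=1+\alpha$ before invoking items (2) and (1) of Theorem \ref{maintheorem}. Part (3) is where you genuinely diverge: the paper stays within the framework of Lemma \ref{lemma}, taking $N=n$ and $\nu=\lfloor c\sqrt{K/M}\,n\rfloor$, obtaining the lower bound $\frac{1}{K}\bigl(\frac{1}{1+c^{2}}-\frac{\sqrt{KM}}{c}\psi(n)\bigr)$ and optimizing $\frac{c}{1+c^{2}}$ at $c=1$ to get the threshold $KM<\tfrac14$; you instead go back to the pigeonhole Proposition \ref{prop1}, exhibiting $N=qm$, $\nu=pm$ with $\lambda=p/q$ a rational root region of $M\lambda^{2}-\lambda+K<0$, whose solvability is exactly the discriminant condition $1-4KM>0$. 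The two conditions are equivalent (since $\max_{c>0}\frac{c}{1+c^{2}}=\tfrac12$), and your verification $a_N+b_\nu\le m^{2}(Kq^{2}+Mp^{2})<m^{2}pq=N\nu$ is sound, including the point that $m$ large makes both indices fall into the range where the growth hypotheses hold. What each approach buys: yours is more elementary (pure pigeonhole, no Cauchy--Schwarz) and makes the optimal aspect ratio $\nu/N$ transparent as a quadratic-discriminant condition, while the paper's keeps the section uniform around Lemma \ref{lemma} and in principle yields a quantitative lower bound on $|A\cap(A+x_n)|$ -- though that bound stays bounded in $n$, so for statement (3) the conclusions coincide.
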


\begin{proof}
Notice first that, without loss of generality, 
we can assume $n\lll a_n$, and hence $\alpha>0$. 
Indeed, otherwise $\overline{d}(A)>0$, and 
the thesis is proved by Corollary \ref{kk}.

\smallskip
(1).  The thesis follows from (2) of 
Theorem \ref{maintheorem} since $a_n\lll n^2$ and
for every constant $c>1$ one has that
$$\liminf_{n\to\infty}\ \frac{g(\lfloor c\cdot f(n)\rfloor)}{n}\ \le\ 
\lim_{n\to\infty}\frac{M\cdot\left(c\cdot K\cdot n^\alpha\right)^\beta}{n}\ =\ 
\lim_{n\to\infty}M\cdot c^\beta\cdot K^\beta\cdot\frac{n^{\alpha\beta}}{n}\ =\  0\,.$$

\smallskip
(2). We use (1) of Theorem \ref{maintheorem}.
Given a constant $c>1$, under our hypotheses one has that
$$\liminf_{n\to\infty}\ \frac{g(\lfloor c\cdot f(n)\rfloor)}{n}\ \le\ 
M\cdot c^\beta\cdot K^\beta.$$
Now,
$$M\cdot c^\beta\cdot K^\beta\ <\ 1-\frac{1}{c}\ \Longleftrightarrow\ 
M\cdot K^\beta\ <\ \frac{c-1}{c^{\beta+1}}\,,$$
and the greatest possible value of the last expression is attained when $c=1+\alpha$,
namely $\frac{\alpha}{\left(1+\alpha\right)^{\beta+1}}$,
as one can directly verify. 

\smallskip
(3). Fix a constant $c>0$. For every given $n$, let
$N=n$ and $\nu=\tau(n)=\lfloor c\cdot\sqrt{K/M}\cdot n\rfloor$. 
By Lemma \ref{lemma}, there exists an element $x_n\in\Delta(B)$ 
such that
$$|A\cap(A+x_n)\cap[1,a_n+b_{\tau(n)}]|\ \ge 
\frac{n^2}{a_n+b_{\tau(n)}}-\frac{n}{\tau(n)}\ \ge$$
$$\ge\ 
\frac{n^2}{K n^2+M c^2\cdot\frac{K}{M}\cdot n^2}-
\frac{n}{\lfloor c\cdot\sqrt{\frac{K}{M}}\cdot n\rfloor}\ =\ 
\frac{1}{K}\cdot\left(\frac{1}{1+c^2}-
\frac{\sqrt{KM}}{c}\cdot\psi(n)\right)$$
where 
$\psi(n)=\frac{c\cdot \sqrt{K/M}\cdot n}{\lfloor c\cdot \sqrt{K/M}\cdot n\rfloor}\longrightarrow 1$ as $n\to\infty$.
So, the last quantity above is positive for all sufficiently large $n$
if and only if $\sqrt{KM}<\frac{c}{1+c^2}$. Now, it is easily checked that
the greatest possible value of the latter expression is $1/2$,
which is attained when $c=1$. This means that if $KM<1/4$ then there
exist elements $x_n\in\Delta(A)\cap \Delta(B)$, \emph{i.e.} the thesis.
\end{proof}

\smallskip
\begin{example}\label{ex2}
Let $A=\{\lfloor K\cdot n\sqrt{n}\rfloor\}$ and $B=\{n^3\}$.
If $K^2\cdot M<4/27$ then $R_k(A)\cap\Delta(B)\ne\emptyset$ for all $k$.
Indeed, we can apply (2) of the theorem above, where $1/\alpha=\beta=2$.
\end{example}

\medskip
\section{A variant of Khintchine's Theorem}

In this final section we exploit further consequences of Lemma \ref{lemma}
and prove a result for a class of zero density sets
that resembles \emph{Khintchine's Recurrence Theorem}.

\smallskip
Let us first introduce some notation.
For sets $A\subseteq\N$, we write $\dd(A)_n$ to denote
the relative density of $A$ on the interval $[1,n]$, \emph{i.e.}
$$\dd(A)_n\ =\ \frac{|A\cap [1,n]|}{n}\,.$$
As already pointed out, the limit superior given
by the upper asymptotic density is
attained along intervals of the form $[1,a_n]$; so one has
$$\overline{d}(A)\ =\ \limsup_{n\to\infty}\dd(A)_{a_n}\ =\ 
\limsup_{n\to\infty}\frac{n}{a_n}.$$

\smallskip
\begin{theorem}\label{abn2}
Let $A=\{a_n\}$ and $B=\{b_n\}$ be infinite sets of natural numbers,
and assume that
$$\liminf_{n\to\infty}\ \frac{a_{n\cdot b_n}}{n^2\cdot b_n}\ =\ l\ <\ \frac{1}{2}\,.$$
Then there exists a sequence $\langle x_n\mid n\in\N\rangle$ of elements of 
$\Delta(B)$ such that
$$\limsup_{n\to\infty}\,
\left(\frac{\dd(A\cap(A+x_n))_{a_n}}
{(\dd(A)_{a_n})^2}\right)\ \ge\ 1-2l\ >\ 0\,.$$
\end{theorem}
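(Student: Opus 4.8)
The plan is to convert the ratio into a counting statement and then feed a carefully chosen pair of finite sets into Lemma~\ref{lemma}. First, since $A\cap[1,a_n]=\{a_1,\dots,a_n\}$ has exactly $n$ elements, $\dd(A)_{a_n}=n/a_n$, so
$$\frac{\dd(A\cap(A+x_n))_{a_n}}{(\dd(A)_{a_n})^2}\ =\ \frac{a_n}{n^2}\cdot\bigl|A\cap(A+x_n)\cap[1,a_n]\bigr|\,.$$
It therefore suffices to single out an infinite set of indices $n$ and elements $x_n\in\Delta(B)$ along which the right-hand side converges to $1-l$; on the remaining indices one lets $x_n$ be any fixed element of $\Delta(B)$ (nonempty since $B$ is infinite). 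I will also use throughout that $a_j\ge j$ for every $j$, as the $a_j$ are distinct positive integers in increasing order.

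Next I would choose the coupling. Pick $m_1<m_2<\cdots$ with $\frac{a_{m_kb_{m_k}}}{m_k^2 b_{m_k}}\to l$ (possible because $l$ is the liminf), and set $n_k=m_kb_{m_k}$; since $m\mapsto mb_m$ is strictly increasing, these indices are distinct and $\{n_k\}$ is infinite. For each $k$ with $m_k\ge 2$, apply Lemma~\ref{lemma} with $h=1$ to the finite sets $A_{n_k}=\{a_1<\dots<a_{n_k}\}$ and $B_{m_k}=\{b_1<\dots<b_{m_k}\}$, obtaining $x_{n_k}\in\Delta(B_{m_k})\subseteq\Delta(B)$ with
$$\bigl|A_{n_k}\cap(A_{n_k}+x_{n_k})\bigr|\ \ge\ \frac{n_k^2}{a_{n_k}+b_{m_k}}-\frac{n_k}{m_k}\,.$$
Every element of $A_{n_k}\cap(A_{n_k}+x_{n_k})$ lies in $A$, is at most $a_{n_k}$, and differs by $x_{n_k}$ from an element of $A$, so the left-hand side is $\le\bigl|A\cap(A+x_{n_k})\cap[1,a_{n_k}]\bigr|$.

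Then comes the bookkeeping. Since $n_k=m_kb_{m_k}$ one has $n_k/m_k=b_{m_k}$, so multiplying the last inequality by $a_{n_k}/n_k^2$ gives
$$\frac{a_{n_k}}{n_k^2}\cdot\bigl|A\cap(A+x_{n_k})\cap[1,a_{n_k}]\bigr|\ \ge\ \frac{1}{1+b_{m_k}/a_{n_k}}-\frac{a_{m_kb_{m_k}}}{m_k^2 b_{m_k}}\,.$$
From $a_{n_k}=a_{m_kb_{m_k}}\ge m_kb_{m_k}$ we get $b_{m_k}/a_{n_k}\le 1/m_k\to 0$, so the first term tends to $1$, and the second tends to $l$ by the choice of the $m_k$; hence the left-hand side tends to $1-l$. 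Consequently
$$\limsup_{n\to\infty}\ \frac{\dd(A\cap(A+x_n))_{a_n}}{(\dd(A)_{a_n})^2}\ \ge\ 1-l\ \ge\ 1-2l\ >\ 0\,,$$
the last inequality using $l<\frac{1}{2}$.

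The one genuine decision is the coupling in the middle step — pairing the first $m_kb_{m_k}$ terms of $A$ with the first $m_k$ terms of $B$. This is precisely what makes $b_{m_k}$ negligible against $a_{m_kb_{m_k}}$ while collapsing the error term $n_k/m_k$ of Lemma~\ref{lemma} to $b_{m_k}$, so that after normalizing by $a_{n_k}/n_k^2$ it becomes exactly the ratio $a_{m_kb_{m_k}}/(m_k^2 b_{m_k})$ that the hypothesis controls. Beyond finding this pairing the argument is routine; in fact it delivers the sharper bound $1-l$.
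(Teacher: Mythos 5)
Your proof is correct and is essentially the paper's own argument: the same key estimate (Lemma \ref{lemma} with $h=1$) applied to the same coupling $A_{n\cdot b_n}$ versus $B_n$, run along a subsequence realizing the liminf, with the off-subsequence shifts filled in arbitrarily. The only divergence is that you use $A_{n_k}\cap(A_{n_k}+x_{n_k})\subseteq A\cap(A+x_{n_k})\cap[1,a_{n_k}]$ directly, whereas the paper's proof passes through the interval $[1,a_{\sigma(n)}+b_n]$ and subtracts $b_n$, which produces the extra term $a_{\sigma(n)}b_n/\sigma(n)^2$ whose liminf is $l$; avoiding that lossy step is what yields your sharper constant $1-l\ \ge\ 1-2l$.
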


\begin{proof}
For every $n$, let $\sigma(n)=n\cdot b_n$,
and apply Lemma \ref{lemma} with $h=1$ to the sets
$A_{\sigma(n)}=\{a_1<\ldots<a_{\sigma(n)}\}$ and 
$B_n=\{b_1<\ldots<b_n\}$.
We obtain the existence of an element
$x_n\in\Delta(B_n)\subseteq\Delta(B)$ such that:
$$|A\cap(A+x_n)\cap[1,a_{\sigma(n)}]|\ \ge\ 
|A\cap(A+x_n)\cap[1,a_{\sigma(n)}+b_n]|-b_n\ \ge$$
$$\ge\ |A_{\sigma(n)}\cap(A_{\sigma(n)}+x_n)|-b_n\ \ge\ 
\frac{\sigma(n)^2}{a_{\sigma(n)}}\cdot
\frac
{1-\frac{a_{\sigma(n)}+b_n} {\sigma(n)\cdot n}}
{1+\frac{b_n}{a_{\sigma(n)}}}-b_n\,.$$
By combining, one gets
$$\frac{\dd(A\cap(A+x_n))_{a_{\sigma(n)}}}{(\dd(A)_{a_{\sigma(n)}})^2}\ =\ 
\frac{|A\cap(A+x_n)\cap[1,a_{\sigma(n)}]|}
{\frac{\sigma(n)^2}{a_{\sigma(n)}}}\ \ge$$
$$\ge\  
\frac
{1-\frac{a_{\sigma(n)}+b_n} {\sigma(n)\cdot n}}
{1+\frac{b_n}{a_{\sigma(n)}}} - 
\frac{a_{\sigma(n)}\cdot b_n}{\sigma(n)^2}\,.$$

Now notice that:

\smallskip
\begin{itemize}
\item
$\liminf_{n\to\infty}\frac{a_{\sigma(n)}}{\sigma(n)\cdot n}=
\liminf_{n\to\infty}\frac{a_{n\cdot b_n}}{n^2\cdot b_n}=l$\,;

\smallskip
\item
$\lim_{n\to\infty}\frac{b_n}{\sigma(n)\cdot n}=
\lim_{n\to\infty}\frac{1}{n^2}=0$\,;

\smallskip
\item
$\lim_{n\to\infty}\frac{b_n}{a_{\sigma(n)}}=
\lim_{n\to\infty}\frac{n\cdot b_n}{a_{n\cdot b_n}}\cdot\frac{1}{n}\le
\lim_{n\to\infty}\frac{1}{n}=0$\,;

\item
$\liminf_{n\to\infty}\frac{a_{\sigma(n)}\cdot b_n}{\sigma(n)^2}=
\liminf\frac{a_{n\cdot b_n}}{n^2\cdot b_n}=l$\,.
\end{itemize}

\smallskip
By considering the inequalities proved above, and by passing
to the limit superiors as $n$ goes to infinity, we finally get:
$$\limsup_{n\to\infty}
\left(\frac{\dd(A\cap(A+x_n))_{a_n}}
{(\dd(A)_{a_n})^2}\right)\ \ge\ 
\limsup_{n\to\infty}\left(\frac{\dd(A\cap(A+x_n))_{a_{\sigma(n)}}}{(\dd(A)_{a_{\sigma(n)}})^2}\right)\ \ge$$
$$\ge\ \limsup_{n\to\infty}
\left(\frac
{1-\frac{a_{\sigma(n)}}{\sigma(n)\cdot n}-\frac{b_n}{\sigma(n)\cdot n}}
{1+\frac{b_n}{a_{\sigma(n)}}} - 
\frac{a_{\sigma(n)}\cdot b_n}{\sigma(n)^2}\right)\ =\ 
1-2l\ >\ 0\,.$$
\end{proof}

\smallskip
\begin{corollary}
Let $A=\{a_n\}$ be an infinite set of natural numbers. 
If $a_n\lll n^{3/2}$ then there exists
a sequence of shifts $\langle x_n\mid n\in\N\rangle$ such that
$$\limsup_{n\to\infty}\,
\left(\frac
{\frac{|A\cap(A+x_n)\cap[1,n]|}{n}}
{\left(\frac{|A\cap[1,n]|}{n}\right)^2}\right)\ \ge\ 1\,.$$
\end{corollary}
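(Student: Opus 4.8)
The plan is to obtain the corollary as an essentially immediate application of Theorem \ref{abn2} with $B=\N$. First I would record that, taking $b_n=n$, one has $\Delta(B)=\Delta(\N)=\N$, so any sequence of shifts produced by the theorem automatically consists of natural numbers. Next I would verify the hypothesis of Theorem \ref{abn2} in this case:
$$\liminf_{n\to\infty}\frac{a_{n\cdot b_n}}{n^2\cdot b_n}\ =\ \liminf_{n\to\infty}\frac{a_{n^2}}{n^3}\,.$$
Since $a_n\lll n^{3/2}$ and $n^2\to\infty$, the subsequence $a_{n^2}/(n^2)^{3/2}=a_{n^2}/n^3$ also tends to $0$; that is, $a_{n^2}\lll n^3$, and in particular the liminf above equals $0<1/2$. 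Hence Theorem \ref{abn2} applies with $l=0$ and furnishes a sequence $\langle x_n\mid n\in\N\rangle$ in $\N$ with
$$\limsup_{n\to\infty}\frac{\dd(A\cap(A+x_n))_{a_n}}{(\dd(A)_{a_n})^2}\ \ge\ 1-2l\ =\ 1\,.$$

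The second, and only mildly delicate, step is to pass from densities measured along the intervals $[1,a_n]$ to densities measured along $[1,n]$, since the corollary asks for a single sequence of shifts with the limit superior of the ratio $\dd(A\cap(A+x_n))_n/(\dd(A)_n)^2$ at least $1$. To do this I would re-index: set $y_m=x_n$ whenever $m=a_n$ for some $n$ (well defined because $A=\{a_n\}$ is strictly increasing), and $y_m=1$ for all remaining $m$. Because $A$ is infinite, the set $\{a_n\mid n\in\N\}$ is an infinite subset of $\N$ along which $\dd(A\cap(A+y_m))_m/(\dd(A)_m)^2$ equals $\dd(A\cap(A+x_n))_{a_n}/(\dd(A)_{a_n})^2$; and since a limit superior taken over a subsequence cannot exceed the full limit superior,
$$\limsup_{m\to\infty}\frac{\dd(A\cap(A+y_m))_m}{(\dd(A)_m)^2}\ \ge\ \limsup_{n\to\infty}\frac{\dd(A\cap(A+x_n))_{a_n}}{(\dd(A)_{a_n})^2}\ \ge\ 1\,.$$
Unwinding the notation $\dd(C)_m=|C\cap[1,m]|/m$ turns this into precisely the displayed inequality of the statement (with the sequence $\langle y_m\rangle$ renamed $\langle x_n\rangle$).

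I do not anticipate any genuine obstacle here: all the combinatorial substance is already packaged in Theorem \ref{abn2}, and what remains is the routine estimate $a_{n^2}\lll n^3$ together with the harmless bookkeeping of the change of scale. The one point requiring a moment's care is the direction of the inequality in the reindexing — one must use that restricting attention to the sparse index set $\{a_n\}$ can only \emph{decrease} the limit superior, which is exactly the inequality needed, and that assigning $y_m$ arbitrarily off $A$ therefore does no harm.
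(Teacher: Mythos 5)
Your proposal is correct and follows the paper's own route: apply Theorem \ref{abn2} with $B=\N$ (so $b_n=n$, $\Delta(B)=\N$), note that $a_n\lll n^{3/2}$ gives $a_{n^2}\lll n^3$ and hence $l=0<\frac{1}{2}$, and conclude with $1-2l=1$. The only difference is that you spell out the reindexing from the intervals $[1,a_n]$ to $[1,n]$, a bookkeeping step the paper compresses into ``the thesis easily follows,'' and your handling of it (full $\limsup$ dominates the $\limsup$ along the subsequence $m=a_n$) is sound.
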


\begin{proof}
Let $B=\N$. Then the previous theorem applies where $l=0$,
and the thesis easily follows.
\end{proof}

\smallskip
Similarly as Corollary \ref{main1} is derived
from Theorem \ref{abn}, one proves the following
property as a straight consequence of Theorem \ref{abn2}.

\smallskip
\begin{corollary}\label{main2}
Assume that,  for a suitable $\vartheta:\N\to\R^+$, the infinite
sets of natural numbers
$A=\{a_n\}$ and $B=\{b_n\}$ satisfy 
$$\limsup_{n\to\infty}\frac{a_n}{n\cdot\vartheta(n)}\ =\ l_1\ <\ \infty\quad\text{and}
\quad \liminf_{n\to\infty}\frac{\vartheta(n\cdot b_n)}{n}\ =\ l_2\ <\ \infty$$
where $l_1\cdot l_2<1/2$. 
Then there exists a sequence $\langle x_n\mid n\in\N\rangle$
of elements of $\Delta(B)$ such that
$$\limsup_{n\to\infty}\,
\left(\frac{\dd(A\cap(A+x_n))_{a_n}}
{(\dd(A)_{a_n})^2}\right)\ \ge\ 1-2\,l_1 l_2\ >\ 0\,.$$
\end{corollary}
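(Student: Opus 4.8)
The plan is to mimic exactly the derivation that takes Corollary \ref{main1} out of Theorem \ref{abn}, now applying it to Theorem \ref{abn2} instead. The point of the function $\vartheta$ is to ``factor'' the ratio $a_{n b_n}/(n^2 b_n)$ through $\vartheta$: write $\sigma=\sigma(n)=n\cdot b_n$, and note
$$\frac{a_{\sigma}}{n^2\cdot b_n}\ =\ \frac{a_{\sigma}}{\sigma\cdot\vartheta(\sigma)}\cdot\frac{\vartheta(\sigma)}{n}\ =\ \frac{a_{n b_n}}{(n b_n)\cdot\vartheta(n b_n)}\cdot\frac{\vartheta(n\cdot b_n)}{n}\,.$$
First I would observe that since $\sigma(n)=n b_n\to\infty$, the first factor has $\limsup_{n\to\infty}\frac{a_{\sigma(n)}}{\sigma(n)\vartheta(\sigma(n))}\le\limsup_{m\to\infty}\frac{a_m}{m\,\vartheta(m)}=l_1$ (it is a limsup along a subsequence of a sequence whose full limsup is $l_1$). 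The second factor has, by hypothesis, $\liminf_{n\to\infty}\frac{\vartheta(n b_n)}{n}=l_2$. Hence, using the elementary inequality $\liminf(u_n v_n)\le (\limsup u_n)(\liminf v_n)$ for nonnegative sequences (valid when both factors on the right are finite, which holds since $l_1,l_2<\infty$), we get
$$\liminf_{n\to\infty}\frac{a_{n\cdot b_n}}{n^2\cdot b_n}\ \le\ l_1\cdot l_2\ <\ \frac12\,.$$

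With this inequality in hand, Theorem \ref{abn2} applies directly with its parameter $l$ replaced by any value $\le l_1 l_2$; in particular there is a sequence $\langle x_n\mid n\in\N\rangle$ of elements of $\Delta(B)$ with
$$\limsup_{n\to\infty}\left(\frac{\dd(A\cap(A+x_n))_{a_n}}{(\dd(A)_{a_n})^2}\right)\ \ge\ 1-2\,l\ \ge\ 1-2\,l_1 l_2\ >\ 0\,,$$
which is exactly the asserted conclusion. So the whole corollary is three lines: the algebraic factorization through $\vartheta$, the limsup/liminf inequality, and the citation of Theorem \ref{abn2}.

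The only point requiring a moment's care — and the one I'd flag as the ``main obstacle'' if there is one at all — is the bound $\limsup_n \frac{a_{\sigma(n)}}{\sigma(n)\vartheta(\sigma(n))}\le l_1$: one must check that $\sigma(n)=n b_n$ is a sequence tending to infinity (clear, since $b_n\ge n$ for $B\subseteq\N$ listed increasingly, so $\sigma(n)\ge n^2$), so that evaluating $a_m/(m\vartheta(m))$ along $m=\sigma(n)$ can only lower the limsup. One should also note that the hypothesis $l_1<\infty$ with the factorization forces $a_n\lll n^2$ is not literally needed here because Theorem \ref{abn2} carries no such side condition — it is self-contained — so no extra hypothesis-chasing is required. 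This mirrors the footnote convention in Corollary \ref{main1} that finiteness of the two limits is implicitly assumed. Everything else is routine.
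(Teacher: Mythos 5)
Your proof is correct and is essentially the paper's own argument: the same factorization $\frac{a_{nb_n}}{n^2 b_n}=\frac{a_{nb_n}}{nb_n\,\vartheta(nb_n)}\cdot\frac{\vartheta(nb_n)}{n}$, the same bound $\liminf(u_nv_n)\le(\limsup u_n)(\liminf v_n)\le l_1l_2<\tfrac12$, and then a direct appeal to Theorem \ref{abn2}. Your extra remarks (that $\sigma(n)=nb_n\to\infty$ justifies $\limsup_n a_{\sigma(n)}/(\sigma(n)\vartheta(\sigma(n)))\le l_1$, and that the conclusion $1-2l\ge 1-2l_1l_2$ since the actual liminf $l\le l_1l_2$) simply make explicit what the paper leaves implicit.
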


\begin{proof}
Theorem \ref{abn2} applies, since
$$\liminf_{n\to\infty}\frac{a_{n\cdot b_n}}{n^2\cdot b_n}\ \le\
\limsup_{n\to\infty}\frac{a_{n\cdot b_n}}{n\cdot b_n\cdot\vartheta(n\cdot b_n)}\cdot
\liminf_{n\to\infty}\frac{\vartheta(n\cdot b_n)}{n}\ \le\ l_1\,l_2\ <\ \frac{1}{2}\,.$$
\end{proof}

\smallskip
To illustrate the use of the above corollary, let us see a property
that holds for all sets $A=\{a_n\}$ having the same asymptotic size
as the set of primes.

\smallskip
\begin{proposition}
Let $A=\{a_n\}$ and $B=\{b_n\}$ be infinite sets of natural numbers
such that
$$\lim_{n\to\infty}\frac{a_n}{n \log n} =\ 1\quad\text{and}\quad
\liminf_{n\to\infty}\frac{\log b_n}{n}=0.$$
Then for every $\varepsilon>0$ there exist
infinitely many $n$ and elements $x_n\in\Delta(B)$ such that
$$|A\cap(A+x_n)\cap[1,a_n]|\,\ge\,\frac{n}{\log n}\cdot(1-\varepsilon)\,.$$
\end{proposition}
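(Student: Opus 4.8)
The plan is to deduce this Proposition from Corollary~\ref{main2} by choosing the scaling function $\vartheta(n)=\log n$. With this choice, the first hypothesis $\lim_{n\to\infty}a_n/(n\log n)=1$ gives $\limsup_{n\to\infty} a_n/(n\cdot\vartheta(n))=l_1=1$. For the second parameter I compute $\vartheta(n\cdot b_n)/n=\log(n\cdot b_n)/n=(\log n+\log b_n)/n$; since $\log n/n\to 0$ and $\liminf_{n\to\infty}\log b_n/n=0$, I get $\liminf_{n\to\infty}\vartheta(n\cdot b_n)/n=l_2=0$. Hence $l_1\cdot l_2=0<1/2$, so Corollary~\ref{main2} applies and yields a sequence $\langle x_n\rangle$ of elements of $\Delta(B)$ with
$$\limsup_{n\to\infty}\left(\frac{\dd(A\cap(A+x_n))_{a_n}}{(\dd(A)_{a_n})^2}\right)\ \ge\ 1-2\,l_1 l_2\ =\ 1.$$

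Next I would unwind the density notation. By definition $\dd(A)_{a_n}=|A\cap[1,a_n]|/a_n=n/a_n$ (using that $a_n$ is the $n$-th element of $A$), so $(\dd(A)_{a_n})^2=n^2/a_n^2$, and $\dd(A\cap(A+x_n))_{a_n}=|A\cap(A+x_n)\cap[1,a_n]|/a_n$. Therefore the ratio above equals $|A\cap(A+x_n)\cap[1,a_n]|\cdot a_n/n^2$, and the $\limsup\ge 1$ statement says that for every $\varepsilon>0$ there are infinitely many $n$ with
$$|A\cap(A+x_n)\cap[1,a_n]|\ \ge\ \frac{n^2}{a_n}\cdot(1-\varepsilon').$$
Finally I substitute the asymptotics $a_n\sim n\log n$: for any fixed $\varepsilon>0$, once $n$ is large we have $n^2/a_n\ge (n/\log n)(1-\varepsilon'')$, and combining this with the infinitely-many-$n$ inequality (after adjusting the $\varepsilon$'s, e.g. taking $\varepsilon',\varepsilon''$ so that $(1-\varepsilon')(1-\varepsilon'')\ge 1-\varepsilon$) gives the claimed bound $|A\cap(A+x_n)\cap[1,a_n]|\ge (n/\log n)(1-\varepsilon)$ for infinitely many $n$.

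The only mildly delicate point is the bookkeeping of the three small parameters: the $\varepsilon$ in the statement, the slack coming from replacing $a_n$ by its asymptotic $n\log n$, and the slack inside the $\limsup\ge 1$ (which only guarantees values $\ge 1-\varepsilon'$ for infinitely many $n$, not a limit). None of this is hard — it is the standard ``absorb one $\varepsilon$ into another'' argument — but it is the step where one must be a little careful that the two sources of error are both one-sided in the favorable direction, so that their product still beats $1-\varepsilon$. Everything else is a direct appeal to Corollary~\ref{main2} and the definition of $\dd(\cdot)_n$.
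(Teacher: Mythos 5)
Your proposal is correct and follows essentially the same route as the paper: choose $\vartheta(n)=\log n$, verify $l_1=1$ and $l_2=0$ so that Corollary~\ref{main2} applies, rewrite the density ratio as $|A\cap(A+x_n)\cap[1,a_n]|\cdot a_n/n^2$, and then trade the asymptotic $a_n\sim n\log n$ against the slack in the $\limsup\ge 1$ statement by adjusting the epsilons. The bookkeeping step you flag is handled in the paper exactly as you suggest, by taking both losses of the form $(1-\delta)$ and choosing $\delta$ with $(1-\delta)^2\ge 1-\varepsilon$.
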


\begin{proof}
Let $\vartheta(n)=\log n$. By the hypotheses,
$$\lim_{n\to\infty}\frac{a_n}{n\cdot\vartheta(n)}\ =\ 1\quad
\text{and}\quad
\lim_{n\to\infty}\frac{\vartheta(n\cdot b_n)}{n}\ =\ 
\lim_{n\to\infty}\frac{\log n+\log b_n}{n}\ =\ 0\,.$$
So, the previous corollary applies, and we get the existence
of elements $x_n\in\Delta(B)$ such that
$$\limsup_{n\to\infty}\,
\left(\frac{\dd(A\cap(A+x_n))_{a_n}}
{(\dd(A)_{a_n})^2}\right)\ \ge\ 1\,.$$
Now notice that
$$\frac{\dd(A\cap(A+x_n))_{a_n}}
{(\dd(A)_{a_n})^2}\ =\ 
|A\cap(A+x_n)\cap[1,a_n]|\cdot\frac{a_n}{n^2}\,.$$
So, for every $\delta>0$, there exist infinitely many $n$ that satisfy
$$|A\cap(A+x_n)\cap[1,a_n]|\cdot\frac{a_n}{n^2}\ \ge\ 1-\delta\,.$$
By our hypothesis on $\{a_n\}$, we
know that $\frac{n\cdot\log n}{a_n}\ge 1-\delta$ for
all sufficiently large $n$, and so we can conclude that there exist infinitely 
many $n$ and elements $x_n\in\Delta(B)$ such that:
$$|A\cap(A+x_n)\cap[1,a_n]|\ \ge\ \frac{n^2}{a_n}\cdot(1-\delta)\ =\ 
\frac{n}{\log n}\cdot\frac{n\,\log n}{a_n}\cdot(1-\delta)\ \ge\ 
\frac{n}{\log n}\cdot(1-\delta)^2\,.$$
The proof is completed by choosing $\delta$ in such a way 
that $(1-\delta)^2\ge 1-\varepsilon$.
\end{proof}

\smallskip
\begin{example}\label{ex3}
Let $P=\{p_n\}$ be the set of prime numbers.
Then, for any given $\varepsilon>0$, there exist arbitrarily large $n$
such that one finds  ``nearly" $(n/\!\log n)$-many
pairs of primes $p,p'\le p_n$ which have a common distance $p-p'=d$.
Moreover, such a distance $d$ can be taken to belong to
any prescribed set of distances $\Delta(B)$, provided $B=\{b_n\}$ is not
too sparse in the precise sense that $\log b_n\lll n$
(\emph{e.g.}, one can take 
$b_n=\lfloor 10^{\frac{n}{\log n}}\rfloor$).
\end{example}


\begin{thebibliography}{HD}



\bibitem{be1}
V. Bergelson. Sets of recurrence of $\Z^m$-actions
and properties of sets of differences in $\Z^m$,
\emph{J. Lond. Math. Soc.} \textbf{31} (1985), 295-304.

\bibitem{be2}
V. Bergelson.
Ergodic Ramsey Theory - an update, in
\emph{Ergodic Theory of $\Z^d$-actions}, London Math. Soc. Lecture
Notes Ser. \textbf{228} (1996), 1-61.

\bibitem{behl}
V. Bergelson, P. Erd\"os, N. Hindman, and T. \L uczak.
Dense difference sets
and their combinatorial structure,
in \emph{The Mathematics of Paul Erd\"os, I} (R. Graham and J.
Ne\u{s}et\u{r}il, eds.), Springer (1997), 165-175.

\bibitem{dn}
M. Di Nasso.
Embeddability properties of difference sets, 
\emph{Integers} \textbf{14} (2014), A27.

\bibitem{es}
P. Erd\"os and A. S\`ark\"ozy, \emph{On differences and sums
of integers},
part I: J. Number Theory \textbf{10} (1978), pp. 430--450,
part II: Bull. Greek Math. Soc. \textbf{18} (1977), pp. 204--223.

\bibitem{ef}
P. Erd\"os and R. Freud.
On disjoint sets of differences, 
\emph{J. Number Theory} \textbf{18} (1984), 99-109. 

\bibitem{jin}
R. Jin.
The sumset phenomenon, 
\emph{Proc. Amer. Math. Soc.} \textbf{130} (2002),
855--861.

\bibitem{lm}
N. Lyall and A. Magyar.
Poynomial configurations in difference sets,
\emph{J. Number Theory} \textbf{129} (2009), 439-450.

\bibitem{pss} J. Pintz, W.L. Steiger, and E. Szemer\'edi.
On sets of natural numbers whose difference set contains no squares, 
\emph{J. Lond. Math. Soc.} \textbf{37} (1988), 219-231.

\bibitem{ru1}
I.Z. Ruzsa. On difference-sequences,
\emph{Acta Arith.} \textbf{25} (1974), 151-157.

\bibitem{ru2}
I.Z. Ruzsa. On difference sets,
\emph{Studia Sci. Math. Hungar.} \textbf{13} (1978), 319-326.

\bibitem{rs}
I.Z. Ruzsa and T. Sanders. Difference sets and the primes,
\emph{Acta Arith.} \textbf{131} (2008), 281-301.

\bibitem{sa1}
A. S\'ark\"ozy.
On difference sets of sequences of integers - part I,
\emph{Acta Math. Hung.} \textbf{31} (1978), 125-149.

\bibitem{sa2}
A. S\'ark\"ozy.
On difference sets of sequences of integers - part II,
\emph{Ann. Univ. Sci. Budap., Sect. Math.} \textbf{21} (1978), 45-53 .

\bibitem{sa3}
A. S\'ark\"ozy.
On difference sets of sequences of integers - part III,
\emph{Acta Math. Hung.} \textbf{31} (1978), 355-386.

\bibitem{st} C.L. Stewart and R. Tijdeman.
On density-difference sets of sets of integers, 
in \emph{Studies in Pure Mathematics to the Memory of Paul Tur\'an}
(P. Erd\"os, ed.), Birkh\"auser Verlag (1983), 701--710.
\end{thebibliography}
\end{document}